\documentclass [a4paper,12pt]{article}
\usepackage{amsmath}
\usepackage{amsfonts}
\usepackage{indentfirst}
\usepackage{esint}
\usepackage{latexsym}
\usepackage{bbm}
\usepackage{amsfonts}
\usepackage{mathrsfs}
\usepackage{amssymb}
\usepackage{amsmath}
\usepackage{amsthm}
\usepackage{latexsym}
\usepackage{indentfirst}
\usepackage{enumitem}
\usepackage{bm}
\usepackage{esint}
\usepackage{hyperref}
\usepackage{cleveref}
\usepackage[numbers,square]{natbib}
\numberwithin{equation}{section}
\usepackage{color}
\allowdisplaybreaks
\usepackage{amsthm}
\usepackage{amssymb}
\usepackage{enumerate}
\usepackage{ulem}
\allowdisplaybreaks

\newtheorem{theorem}{Theorem}[section]
\newtheorem{lemma}[theorem]{Lemma}

\newtheorem{rmk}[theorem]{Remark}

\makeatletter
\usepackage{amsmath}
\usepackage{amsfonts}
\usepackage{indentfirst}
\usepackage{esint}
\usepackage{latexsym}
\usepackage{authblk}
\usepackage{color}
\UseRawInputEncoding
\usepackage{amsthm}
\usepackage{amssymb}
\usepackage{enumerate}
\usepackage{ulem}
\makeatletter

\newcommand{\Rmnum}[1]{\expandafter\@slowromancap\romannumeral #1@}
\makeatother
\begin{document}

\title{Sign-preserving solutions to the Tzitz\'eica equation on lattice graphs}
\author[a,b]{Pengxiu Yu\thanks{Email: Pxyu@ruc.edu.cn}}
\author[c,d]{Yiping Zhang\thanks{Corresponding author, Email: zhangyiping161@mails.ucas.ac.cn}}
\affil[a]{\footnotesize{School of Mathematics,
Renmin University of China, Beijing 100872, China}}
\affil[b]{\footnotesize{Fakult\"{a}t f\"{u}r Mathematik, Universit\"{a}t Bielefeld, Bielefeld 100131, Germany
}}
\affil[c]{\footnotesize{School of Mathematics and Statistics, and Hubei Key Laboratory of Mathematical Sciences, Central China Normal University, Wuhan 430079, China}}
\affil[d]{\footnotesize{Key Laboratory of Nonlinear Analysis \& Applications (Ministry of Education), Central China Normal University, Wuhan 430079, China}}
\date{}
\maketitle

\begin{abstract}
On the lattice graph $\mathbb{Z}^n$, we establish the existence of sign-preserving solutions to the Tzitz\'eica equation. We prove the existence of positive solutions and two classes of negative solutions under different assumptions, and derive their decay estimates. The proof is based on a suitable approximation scheme, the monotone convergence theorem, and an exhaustion argument. These results extend those of Hua, Huang, and Wang (Anal. PDE, 2026) by establishing the existence of both positive and negative sign-preserving solutions together with their decay estimates.
\end{abstract}

\section{Introduction and main results}\label{s1}

The Tzitz\'eica model
\begin{equation}\label{24}
u_{xy}=e^u-e^{-2u}
\end{equation}
originates from the pioneering work of G. Tzitz\'eica \cite{TZ1908,TZ1910,TZ3}. It first arose in the study of surfaces in $\mathbb{R}^3$ for which the ratio of the negative Gaussian curvature to the fourth power of the distance from the tangent plane to a fixed point is constant. The equation \eqref{24} also arises in the Euler equation for one-dimensional ideal gas dynamics \cite{Euler1,Euler2,Euler3} and, in magnetohydrodynamics, is equivalent to the Hirota--Satsuma partial differential equation \cite{hs1,hs2}. See also \cite{see1,see2} and the references therein.

Analysis on graphs has attracted considerable attention in recent years, not only because of its rich mathematical structure but also because of its broad applications in image processing, data mining, and complex networks \cite{neural1,neural2,neural3,neural4}. Among the many research directions, partial differential equations arising from geometry and physics have received considerable attention in the graph setting. A. Grigor'yan et al. \cite{griyang3,griyang2,griyang1} established the Sobolev spaces and the corresponding functional framework on graphs, laying the foundation for subsequent developments. Since then, extensive research has been devoted to partial differential equations on graphs. In particular, X. Zhang et al. \cite{pq} investigated the multiplicity and uniqueness of positive solutions to the superlinear singular $(p,q)$-Laplacian equation, while P. Yu \cite{ypxjmp} and L. Sun et al. \cite{sunlladvance} studied Kazdan--Warner type equations using the Brouwer degree method. We refer the reader to \cite{huakeller,huabbli,linyau,stz,sunsin} for further related results.

Recently, A. Jevnikar and W. Yang \cite{tzcv} studied blow-up phenomena for the Tzitz\'eica equation
\begin{equation}\label{jyangtz}
\Delta u+h_1e^u-h_2e^{-2u}=0,
\end{equation}
in the unit ball $\mathbb{B}_1\subset\mathbb{R}^2$, where $h_1$ and $h_2$ are smooth positive functions. When $h_2\equiv0$, equation \eqref{jyangtz} reduces to the classical Liouville equation
\begin{equation}\label{liouv}
\Delta u+h_1e^u=0,
\end{equation}
which arises in the study of Gaussian curvature under conformal changes of the metric; see \cite{liouv1,liouv2,liouv3,liouv4}. In the same work, they also established a sharp Moser--Trudinger type inequality on compact surfaces $M$ and derived quantization properties of local blow-up masses associated with blow-up sequences of solutions.

Motivated by the above works, in this paper we extend the Tzitz\'eica equation \eqref{jyangtz} to lattice graphs and establish several new results. Specifically, we study the following Tzitz\'eica equations.
\begin{equation}\label{tz11}
-\Delta u+\lambda_1e^{Au(x)}(e^{Au(x)}-1)
+\lambda_2e^{-Bu(x)}(e^{-Bu(x)}-1)\pm h_3(x)=0
\end{equation}
and
\begin{equation}\label{tz22}
-\Delta u+\lambda_1(e^{Au(x)}-1)
+\lambda_2(e^{-Bu(x)}-1)\pm h_3(x)=0,
\end{equation}
where
\begin{equation}\label{1.1}
h_3=4\pi\sum_{j=1}^{M}n_j\delta_{p_j},
\end{equation}
$n_1,\ldots,n_M$ are positive constants, and
$p_1,\ldots,p_M$ are distinct vortex points in $\mathbb{Z}^n$. $\lambda_1$, $\lambda_2$ and $A$, $B$ are positive constants to be determined.

When $\lambda_2\equiv0$ in \eqref{tz11}, we obtain the self-dual Chern-Simons vortex equation,
\begin{equation}\label{cs}
-\Delta u+\lambda_1e^{Au}(e^{Au}-1)+h_3=0.
\end{equation}
There have been many studies on this model since then, such as
\cite{houk,MR5045709,lisunyang,wangchuhua}.
If $\lambda_2\equiv0$, the Tzitz\'eica  equation \eqref{tz22} reduces to the Abelian Higgs equation,
\begin{equation}\label{ah}
-\Delta u+\lambda_1(e^{Au}-1)+h_3=0.
\end{equation}
There have been many studies of this model; see
\cite{ah3,ah1,ah2}.
The Tzitz\'eica equations \eqref{tz11}--\eqref{tz22} generalize both the Chern--Simons equation \eqref{cs} and the Abelian Higgs equation \eqref{ah}. Therefore, their analysis is considerably more challenging than that of either equation alone.

Before presenting our problem and main results, we introduce some basic notation for lattice graphs. Throughout this paper, let $n\ge2$ and
\begin{equation*}
V=\mathbb{Z}^n
=\left\{
x=(x_1,\ldots,x_n):
x_i\in\mathbb{Z},\ 1\le i\le n
\right\}.
\end{equation*}
The distance on $\mathbb{Z}^n$ is defined by
\begin{equation*}
d(x,y)=\sum_{i=1}^{n}|x_i-y_i|,
\qquad x,y\in\mathbb{Z}^n,
\end{equation*}
and we write $d(x)=d(x,0)$. The edge set is
\begin{equation*}
E=\left\{
\{x,y\}:x,y\in\mathbb{Z}^n,\;
\sum_{i=1}^{n}|x_i-y_i|=1
\right\}.
\end{equation*}
If $\{x,y\}\in E$, we write $x\sim y$.
We denote by
\begin{equation*}
C(\mathbb{Z}^n)
=\{u:\mathbb{Z}^n\rightarrow\mathbb{R}\}
\end{equation*}
the space of all real-valued functions on $\mathbb{Z}^n$, and define the support of $u$ by
\begin{equation*}
\operatorname{supp}(u)
=
\left\{
x\in\mathbb{Z}^n:u(x)\neq0
\right\}.
\end{equation*}
Let $C_0(\mathbb{Z}^n)$ denote the space of all real-valued functions with finite support.
For any finite subset $\Omega\subset\mathbb{Z}^n$, its boundary is defined by
\begin{equation*}
\partial\Omega=
\left\{
y\in\mathbb{Z}^n\setminus\Omega:
\exists\,x\in\Omega\ \text{such that}\ y\sim x
\right\},
\end{equation*}
and we write
\begin{equation*}
\overline{\Omega}=\Omega\cup\partial\Omega.
\end{equation*}
For any function $u:\mathbb{Z}^n\to\mathbb{R}$,
the $l^p$-norm of $u$ is defined by
\begin{equation*}
||u||_{l^p(\mathbb{Z}^n)}
=
\left\{
\begin{array}{lll}
\left(\sum_{x\in\mathbb{Z}^n}|u(x)|^p\right)^{\frac{1}{p}}, \ 1\leq p<\infty,\\[20pt]
\mathrm{sup}_{\mathbb{Z}^n}|u(x)|,\ \ p=\infty.
\end{array}
\right.
\end{equation*}
The difference operator is defined by
\begin{equation*}
\nabla_{xy}u=u(y)-u(x),
\quad x,y\in\mathbb{Z}^n.
\end{equation*}
The graph Laplacian is defined by
\begin{equation*}
\Delta u(x)=\sum_{d(x,y)=1}\bigl(u(y)-u(x)\bigr).
\end{equation*}

Our main results are as follows.

\begin{theorem}\label{positivethm1}
For $\lambda_1A-\lambda_2B\geq0$, the  Tzitz\'eica  equation
\begin{equation}\label{tz1}
-\Delta u+\lambda_1e^{Au(x)}(e^{Au(x)}-1)
+\lambda_2e^{-Bu(x)}(e^{-Bu(x)}-1)-h_3(x)=0
\end{equation}
admits a positive solution
$u\geq0$  in $\mathbb{Z}^n$. If $\lambda_1A-\lambda_2B>0$, the solution has the
decay estimate
\begin{equation*}
u=O(e^{-md(x)}),
\end{equation*}
where $m=\log\left(
1+\frac{\lambda_1A-\lambda_2B}{2n}
\right)$.
\end{theorem}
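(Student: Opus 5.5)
We construct the solution by exhaustion with a monotone iteration. Write the equation as $-\Delta u+f(u)=h_3$ with
\begin{equation*}
f(t)=\lambda_1e^{At}(e^{At}-1)+\lambda_2e^{-Bt}(e^{-Bt}-1).
\end{equation*}
Then $f(0)=0$, and $f'(t)=\lambda_1A(2e^{2At}-e^{At})-\lambda_2B(2e^{-2Bt}-e^{-Bt})$. In particular $f'(0)=\lambda_1A-\lambda_2B\ge0$, and for $t\ge0$ we have $f'(t)\ge \lambda_1A-\lambda_2B\ge0$. Indeed, $2e^{2At}-e^{At}$ is increasing in $t$ and equals $1$ at $t=0$, while $2s^2-s\le 1$ for $s=e^{-Bt}\in(0,1]$. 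So $f$ is nondecreasing on $[0,\infty)$ and $f(t)\ge (\lambda_1A-\lambda_2B)t$ there.

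Take the balls $\Omega_k=\{d(x)\le k\}$, with $k$ large enough that every $p_j\in\Omega_k$. On each $\Omega_k$ we solve the Dirichlet problem $-\Delta u_k+f(u_k)=h_3$ in $\Omega_k$, $u_k=0$ on $\partial\Omega_k$.
\begin{itemize}
\item \textbf{Sub- and supersolution.} $\underline u=0$ is a subsolution, since $h_3\ge0$. For a supersolution take $\bar u=w$, the solution of $-\Delta w=h_3$ on $\Omega_k$ with zero boundary data. Then $w\ge0$ by the maximum principle, so $f(w)\ge0$ and $-\Delta w+f(w)\ge h_3$. Alternatively, and uniformly in $k$, use $\bar u\equiv C$ with $f(C)\ge \max h_3=4\pi\max n_j$ at the vortex points. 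This works because $f(t)\to\infty$ as $t\to\infty$ and $h_3$ vanishes elsewhere, where $f(C)\ge0$ suffices.
\item \textbf{Existence on $\Omega_k$.} Run the monotone iteration $-\Delta u^{(i+1)}+Ku^{(i+1)}=Ku^{(i)}-f(u^{(i)})+h_3$, where $K$ bounds $f'$ on $[0,C]$, starting from $\bar u\equiv C$. Since the problem is finite-dimensional, the discrete maximum principle for $-\Delta+K$ gives a decreasing sequence trapped in $[0,C]$, which converges to a solution $0\le u_k\le C$.
\item \textbf{Monotonicity in $k$.} The restriction of $u_{k+1}$ to $\Omega_k$ is $\ge0=u_k$ on $\partial\Omega_k$. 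Comparison for $-\Delta+f$ (valid because $f$ is nondecreasing on $[0,\infty)$, where all functions live) gives $u_k\le u_{k+1}$ on $\Omega_k$.
\item \textbf{Passage to the limit.} Extend $u_k$ by zero. The sequence is increasing and bounded by $C$, so by the monotone convergence theorem $u=\lim u_k$ exists pointwise. Passing to the limit in the equation at each fixed vertex, which involves only finitely many neighbours, shows that $u\ge0$ solves \eqref{tz1}.
\end{itemize}

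For the decay when $\delta:=\lambda_1A-\lambda_2B>0$, we use a barrier. Outside $R_0:=\max_j d(p_j)$ we have $h_3=0$ and $f(u)\ge\delta u$, so $-\Delta u+\delta u\le0$ there. The candidate barrier is $\phi(x)=C'e^{-m d(x)}$ with $e^{m}=1+\delta/(2n)$. At a point $x$ with $d(x)=r$, each of the $2n$ neighbours $y$ has $d(y)=r\pm1$. Hence
\begin{equation*}
-\Delta\phi(x)\ge 2n\phi(x)\bigl(1-e^{m}\bigr)=-\delta\phi(x),
\end{equation*}
which gives $-\Delta\phi+\delta\phi\ge0$. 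Choose $C'$ so that $\phi\ge C\ge u$ on $\{d(x)\le R_0+1\}$. Then on $\Omega_k\setminus\{d\le R_0\}$ each $u_k$ satisfies $-\Delta u_k+\delta u_k\le 0\le -\Delta\phi+\delta\phi$, with $u_k\le\phi$ on the boundary of that region, since $u_k=0$ on $\partial\Omega_k$. The maximum principle gives $u_k\le\phi$, and letting $k\to\infty$ yields $u=O(e^{-md(x)})$.

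The main obstacle is the comparison step. The nonlinearity is not globally monotone: the $\lambda_2$ term has $f'<0$ possible for negative $t$. The key is to keep every function in $[0,\infty)$, where the computation above shows $f'\ge\delta\ge0$; this is exactly where the hypothesis $\lambda_1A-\lambda_2B\ge0$ enters. The same inequality $f(t)\ge\delta t$ on $t\ge0$ drives the decay estimate. If the barrier computation on the diagonal-direction neighbours needs care, replace $d(x)$ by the $\ell^1$ distance directly as above, since each coordinate step changes it by exactly $\pm1$.
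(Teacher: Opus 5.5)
Your proposal is correct and follows essentially the same route as the paper: monotone iteration on finite exhausting domains with a uniform constant bound (your $C$ plays the role of the paper's $M_0$), monotone convergence of the zero-extended solutions, and comparison with the barrier $e^{-md(x)}$ using $f(t)\ge(\lambda_1A-\lambda_2B)t$ for $t\ge0$. The differences are minor and, if anything, tidier: you iterate down from the supersolution $C$ and get monotonicity in $k$ from the nonlinear comparison principle (via $f'\ge\lambda_1A-\lambda_2B\ge0$ on $[0,\infty)$) rather than by comparing iterates domain by domain, and you apply the maximum principle on the finite annulus $\{R_0<d\le k\}$ instead of on an unbounded exterior region.
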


To better introduce Theorem \ref{negativethm2}, we first define
\begin{equation*}
g(x)=\lambda_1e^{Ax}(e^{Ax}-1)
+\lambda_2e^{-Bx}(e^{-Bx}-1).
\end{equation*}
For $\lambda_1A-\lambda_2B>0$, it is easy to see that $g(x)$ has exactly two zeros, $s_0<0$ and $0$, and that $g(x)<0$ for any $x\in(s_0,0)$.

\begin{theorem}\label{negativethm2}
For $\lambda_1A-\lambda_2B>0$ and any $\delta\in(0,-s_0)$, with $h_3$ defined by \eqref{1.1}, if $4\pi\max_j{n_j}\leq-g(s_0+\delta)$, then
there  exists a nonpositive solution $u\in[s_0+\delta,0]$ to the  Tzitz\'eica  equation
\begin{equation}\label{tz2}
-\Delta u+\lambda_1e^{Au(x)}(e^{Au(x)}-1)
+\lambda_2e^{-Bu(x)}(e^{-Bu(x)}-1)+h_3(x)=0
\end{equation}
in $\mathbb{Z}^n$. Moreover, the solution $u$ has a decay estimate
\begin{equation*}
u=O(e^{-m(1-\epsilon_0)d(x)}),
\end{equation*}
where $m=\log\left(
1+\frac{\lambda_1A-\lambda_2B}{2n}
\right)$ and $0<\epsilon_0<1$.

\end{theorem}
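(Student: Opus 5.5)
We prove Theorem \ref{negativethm2} by monotone iteration on an exhaustion of $\mathbb{Z}^n$, followed by a comparison argument for the decay. Write the equation as $-\Delta u+g(u)+h_3=0$. On $(s_0,0)$ we have $g<0$, and by assumption $g(s_0+\delta)\le-4\pi\max_j n_j$, so $g(s_0+\delta)+h_3\le0$ everywhere. Since $g(0)=0$ and $h_3\ge0$, $g(0)+h_3\ge0$.

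\textbf{Step 1: sub- and supersolutions on a finite box.} Let $\Omega_k=\{d(x)\le k\}$, with $k$ large so that every $p_j\in\Omega_k$. Consider the Dirichlet problem $-\Delta u+g(u)+h_3=0$ in $\Omega_k$ with $u=0$ on $\partial\Omega_k$. The constant $\bar u=0$ is a supersolution, since $-\Delta 0+g(0)+h_3=h_3\ge0$ and $\bar u\ge0$ on the boundary. The constant $\underline u=s_0+\delta$ is a subsolution, since $g(s_0+\delta)+h_3\le0$ and $\underline u<0$ on the boundary.

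Choose $K\ge \max_{[s_0+\delta,0]}|g'|$, so that $s\mapsto Ks-g(s)$ is nondecreasing on $[s_0+\delta,0]$. Starting from $u_0=0$, iterate
\[
(-\Delta+K)u_{i+1}=Ku_i-g(u_i)-h_3 \text{ in } \Omega_k,\qquad u_{i+1}=0 \text{ on } \partial\Omega_k .
\]
The discrete maximum principle for $-\Delta+K$ on the finite set $\Omega_k$ is elementary: look at the point where the minimum is attained. It shows that the sequence is monotone nonincreasing and stays in $[s_0+\delta,0]$. Its limit $u^{(k)}$ solves the Dirichlet problem, which is a finite system of equations, so passing to the limit is trivial.

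\textbf{Step 2: monotonicity in $k$ and passage to $\mathbb{Z}^n$.} Extend $u^{(k)}$ by $0$ outside $\Omega_k$. On $\Omega_k$, the function $u^{(k+1)}\le0$ is a subsolution of the problem on $\Omega_k$, because its boundary values are $\le 0$. Starting the iteration on $\Omega_k$ from $0$ and comparing with this subsolution, or equivalently arguing by the maximum principle as in Step 1, gives $u^{(k+1)}\le u^{(k)}$. So $u^{(k)}$ decreases pointwise in $k$ and is bounded below by $s_0+\delta$. By monotone convergence, $u^{(k)}\to u$ pointwise. Since each equation involves only finitely many neighbours, we may pass to the limit at each fixed $x$ and obtain a solution $u\in[s_0+\delta,0]$ on $\mathbb{Z}^n$.

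\textbf{Step 3: decay, the main obstacle.} Since $u\in[s_0+\delta,0]$ and $g(0)=0$, we have $g(u)= c(x)u$ with $c(x)=g(u(x))/u(x)$. Near $0$, $c(x)\to g'(0)=\lambda_1A-\lambda_2B>0$, but for $u$ close to $s_0$ the quotient $c(x)$ is small. So we first need $u(x)\to0$ as $d(x)\to\infty$. I would get this by comparison on annuli with an explicit exponential barrier. Away from the vortex points, $-\Delta u=-g(u)\ge0$, so $u$ is superharmonic there. Combining this with the structure of $g$, I would show that $\limsup_{d(x)\to\infty}|u|=0$; alternatively, a summability bound obtained from the finite approximations could be used. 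This is the step I expect to be hardest.

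Once we know $u\to0$, fix $\epsilon_0\in(0,1)$. Outside a large ball $\Omega_R$ we have $c(x)\ge(1-\epsilon_0')(\lambda_1A-\lambda_2B)$, and $h_3=0$ there. Consider $w(x)=-Ce^{-m(1-\epsilon_0)d(x)}$. A direct computation of $\Delta e^{-\mu d}$ on $\mathbb{Z}^n$ bounds it by $\bigl((e^{\mu}-1)\cdot 2n\bigr)e^{-\mu d}$ up to sign conventions. This shows that $w$ is a subsolution of the linear problem $-\Delta w+c(x)w\le0$ for $\mu=m(1-\epsilon_0)$, where $m=\log\bigl(1+\frac{\lambda_1A-\lambda_2B}{2n}\bigr)$ is exactly the threshold. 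Choose $C$ large so that $w\le u$ on $\partial\Omega_R$. Then the maximum principle on the exterior region, using that $u-w\to0$ at infinity, gives $w\le u\le0$. This yields $u=O(e^{-m(1-\epsilon_0)d(x)})$.
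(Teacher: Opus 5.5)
Your Steps 1 and 2 are correct and match the paper's construction in substance: shifted monotone iteration from $0$, the constant $s_0+\delta$ as lower barrier, $u^{(k+1)}$ used as a subsolution on $\Omega_k$ to get monotonicity in $k$, and pointwise passage to the limit. The gap is in Step 3.

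You argue that $c(x)=g(u(x))/u(x)$ may be small where $u$ is near $s_0$. From this you conclude that you must first prove $u(x)\to0$, but you give no argument for it; superharmonicity of a bounded function does not by itself force decay. Both halves of your decay argument depend on this unproved fact. It gives the bound $c(x)\ge(1-\epsilon_0')(\lambda_1A-\lambda_2B)$ outside a large ball. It also gives the condition $u-w\to0$ at infinity that your exterior maximum principle needs. As written, the decay estimate is therefore not established.

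The missing observation is that your solution satisfies $u\ge s_0+\delta$, so it never comes within $\delta$ of the zero $s_0$ of $g$. On $[s_0+\delta,0)$ the quotient $g(s)/s$ is continuous and positive, since numerator and denominator are both negative. It tends to $\lambda_1A-\lambda_2B>0$ as $s\to0^-$, hence $c_*:=\inf_{[s_0+\delta,0)}g(s)/s>0$. The paper takes $c_*=f(s_0+\delta)$, using that $f(s)=g(s)/s$ is increasing. Consequently $\Delta u=g(u)\le c_*u$ off a finite set containing the vortex points, with a uniform positive constant. The barrier $-Ce^{-m'd(x)}$ with $m'=\log(1+c_*/2n)$ then gives exponential decay at once. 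Writing $m'=m(1-\epsilon_0)$ is exactly where the $\delta$-dependent $\epsilon_0$ in the statement comes from.

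The paper also avoids needing any information at infinity. It runs this comparison for the finitely supported approximations $\tilde u^k$, which satisfy $\Delta\tilde u^k\le g(\tilde u^k)$ off $\bar\Omega_0$, including on $\partial\Omega_k$. The constant $C$ depends only on $s_0+\delta$ and $R_1$, and then $k\to\infty$. Alternatively, you can apply the path argument of Lemma \ref{l5.3} to $-u$, which uses only boundedness.

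Once this first decay is in hand, your Step 3 becomes a valid bootstrap. Since $u\to0$, you get $c(x)\ge(1-\epsilon_0)(\lambda_1A-\lambda_2B)$ outside a large ball. Your barrier then gives the rate $m(1-\epsilon_0)$ for every $\epsilon_0\in(0,1)$, which is stronger than what the paper's argument yields.
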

\begin{rmk}
In Theorem \ref{negativethm2}, we require
$4\pi\max_j n_j\leq-g(s_0+\delta)$. Conversely, if
$4\pi\max_j n_j$ is sufficiently large, then equation \eqref{tz2}
does not admit any nonpositive solution. Indeed, after relabeling if
necessary, we may assume that $n_1=\max_j n_j$. Suppose that $u$ is a
nonpositive solution of \eqref{tz2}. Since $u\leq 0$, evaluating
\eqref{tz2} at $p_1$ yields
$$
\begin{aligned}
0={}&-\Delta u(p_1)
+\lambda_1e^{Au(p_1)}(e^{Au(p_1)}-1)
+\lambda_2e^{-Bu(p_1)}(e^{-Bu(p_1)}-1)+h_3(p_1)\\[8pt]
\geq{}&
2n u(p_1)-\lambda_1
+\lambda_2e^{-Bu(p_1)}(e^{-Bu(p_1)}-1)
+4\pi n_1.
\end{aligned}
$$
It is easy to verify that the function $2nt-\lambda_1+\lambda_2e^{-Bt}(e^{-Bt}-1)$  is bounded from below on $(-\infty,0]$. Therefore,
if $n_1$ is suitably large, then
$$
2n u(p_1)-\lambda_1
+\lambda_2e^{-Bu(p_1)}(e^{-Bu(p_1)}-1)
+4\pi n_1>0,
$$
which contradicts the above equality. Hence, equation \eqref{tz2}
does not admit any nonpositive solution.
\end{rmk}


\begin{theorem}\label{positivetz2thm1}
For $\lambda_1A-\lambda_2B>0$,
there exists a unique bounded nonnegative solution $\tilde{u}$ to the
 Tzitz\'eica  equation,
\begin{equation}\label{tz3}
-\Delta u+\lambda_1(e^{Au}-1)
+\lambda_2(e^{-Bu}-1)-h_3=0,
\end{equation}
 satisfying  $0\leq u\leq\tilde{u}$, where $u$ is  obtained in Theorem
\ref{positivethm1}. Moreover, the solution $\tilde{u}$ has the decay estimate
\begin{equation*}
\tilde{u}=O(e^{-md(x)}),
\end{equation*}
where $m=\log\left(
1+\frac{\lambda_1A-\lambda_2B}{2n}
\right)$.
\end{theorem}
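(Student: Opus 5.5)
We construct $\tilde u$ by monotone iteration on finite balls, show uniqueness by a maximum principle, and get the decay from a comparison with an explicit exponential barrier. Set
$$f(t)=\lambda_1(e^{At}-1)+\lambda_2(e^{-Bt}-1).$$
Then $f(0)=0$ and $f'(0)=\lambda_1A-\lambda_2B>0$. Also $f''>0$, so $f$ is convex and increasing on $[0,\infty)$, with $f(t)\ge (\lambda_1A-\lambda_2B)t$ for $t\ge0$.

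\textbf{Comparison with $u$ from Theorem \ref{positivethm1}.} For $t\ge0$ we have
$$e^{At}(e^{At}-1)\ge e^{At}-1 \quad\text{and}\quad e^{-Bt}(e^{-Bt}-1)\ge e^{-Bt}-1,$$
since $e^{-Bt}\le1$ and $e^{-Bt}-1\le0$. Hence the nonlinearity of \eqref{tz1} dominates $f$ on $[0,\infty)$. Since $u\ge0$ solves \eqref{tz1}, it satisfies $-\Delta u+f(u)-h_3\le 0$, so $u$ is a subsolution of \eqref{tz3}.

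\textbf{Supersolution.} A large constant is not enough at the vortices, so we take $\bar u=C\sum_j G_j$ with suitable bumps, or more simply
$$\bar u=K\max_j e^{-m' d(x,p_j)}.$$
Alternatively, we use the constant $K=f^{-1}(4\pi\max_j n_j)$, which satisfies $-\Delta K+f(K)-h_3\ge0$ everywhere. That suffices for existence and boundedness.

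\textbf{Existence.} On the balls $B_R$ we solve \eqref{tz3} with boundary data $0$ by monotone iteration between $u|_{\overline{B_R}}$ and $K$. We add $\Lambda t$ to both sides, with $\Lambda\ge\max_{[0,K]}f'$, so that the iteration map is monotone. Alternatively, we minimize the strictly convex functional
$$\tfrac12\sum|\nabla v|^2+\sum F(v)-\sum h_3v,$$
with $F'=f$, on functions supported in $B_R$, and obtain the ordering by the discrete maximum principle. The solutions $u_R$ increase in $R$, since $u_R$ is a subsolution on $B_{R+1}$ with zero data, and they are bounded by $K$. Also $u_R\ge u$ on $B_R$ once the boundary is handled: $u\ge0$ on $\partial B_R$ while $u_R=0$ there. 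To get around this, we instead run the iteration with boundary data $u|_{\partial B_R}$, or use the exhaustion with the comparison function $\max(u,u_R)$. By monotone convergence the limit $\tilde u$ solves \eqref{tz3} and satisfies $u\le\tilde u\le K$.

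\textbf{Uniqueness.} Let $v,w$ be two bounded nonnegative solutions. At a point where $v-w$ attains its supremum, $-\Delta(v-w)\ge0$ and $f(v)-f(w)>0$ would give a contradiction. The supremum need not be attained on $\mathbb{Z}^n$, so we use a standard almost-maximum argument. We consider $v-w-\varepsilon\phi$ with $\phi=\cosh$-type growth $\sum_i\cosh(\eta x_i)$, for which $\Delta\phi\le c(\eta)\phi$ with $c(\eta)\to0$. The mean value theorem gives $f(v)-f(w)\ge c_0(v-w)$ with $c_0=f'(0)$ when $v,w\ge0$, by convexity. Choosing $\eta$ so small that $c(\eta)<c_0$ yields $v\le w$, and symmetrically $w\le v$.

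\textbf{Decay.} Outside $P=\{p_j\}$ we have $-\Delta\tilde u+c_0\tilde u\le0$ with $c_0=\lambda_1A-\lambda_2B$. Note $e^{-md(x)}$ with $e^m=1+c_0/(2n)$ satisfies
$$-\Delta e^{-md}+c_0e^{-md}\ge0$$
away from the origin. We verify this by counting the neighbors at distance $d+1$ and $d-1$; this is the same computation used for Theorem \ref{positivethm1}. We compare $\tilde u$ with $Ce^{-md(x)}$ on $\{d(x)>R_0\}$, with $R_0$ large enough that the region contains no vortex, and with $C$ large so that the barrier dominates on $d=R_0$. To apply the maximum principle on this unbounded region we need $\tilde u\to0$ at infinity.

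\textbf{Main obstacle: $\tilde u\to0$ at infinity.} We expect this to be the hardest step. We plan to get it by applying the same comparison to the approximants $u_R$ on annuli, where the boundary is finite: there $u_R=0$ on $\partial B_R$ and $u_R\le K\le Ce^{-mR_0}$-scaled data on the inner sphere. This gives $u_R\le Ce^{-md}$ uniformly in $R$, and passing to the limit gives the decay of $\tilde u$. For this we must construct the approximants with zero boundary data and check separately that the limit still dominates $u$. That holds because $u$ itself decays, by Theorem \ref{positivethm1}, so uniqueness of bounded solutions pins the limit down and $u\le\tilde u$ follows from the comparison argument in the uniqueness step applied to sub/solution pairs.
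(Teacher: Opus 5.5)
Your argument is correct, but it takes a different route from the paper's.

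\textbf{The paper's route.} The paper does not exhaust by finite domains for this theorem. It runs the monotone iteration directly on $\mathbb{Z}^n$, inverting $\Delta-L$ on $c_0(\mathbb{Z}^n)$ by a Neumann series (Lemma \ref{l5.1}), so every iterate vanishes at infinity. Lemma \ref{l5.3} then proves decay as an a priori property of \emph{every} bounded nonnegative solution, by a path argument. Off the vortices, $\Delta\tilde u\ge\mu\tilde u$ forces some neighbour with $\tilde u(y)\ge q\,\tilde u(x)$, where $q=1+\mu/(2n)$. Boundedness forces the resulting path to reach $\Omega_0$, which gives $\tilde u(x)\le q^{R_0}\sup_{\Omega_0}\tilde u\;q^{-d(x)}$. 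Uniqueness and $u\le\tilde u$ then follow from the maximum principle at attained maxima, using the decay of both functions.

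\textbf{Your route.} You obtain decay only for your approximants, but uniformly in $R$, via the exponential barrier on finite annuli. This sidesteps the question of whether the limit vanishes at infinity. You get uniqueness and $u\le\tilde u$ from the Phragm\'en--Lindel\"of perturbation $v-w-\varepsilon\sum_i\cosh(\eta x_i)$. That argument needs only boundedness and $f'\ge f'(0)=\mu$ on $[0,\infty)$, so the decay of $u$ from Theorem \ref{positivethm1} that you invoke at the end is not needed.

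\textbf{What each buys.} Your route uses only the finite-set maximum principle, and it gives uniqueness in $l^\infty$ before any decay is known. The paper's route gives decay for all bounded nonnegative solutions directly. You reach that conclusion only by combining your construction with uniqueness.

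\textbf{Side remarks to discard.} Several remarks in your write-up are wrong, though none is used in the argument that closes.
\begin{itemize}
\item A large constant \emph{is} a supersolution, as your own $K$ shows.
\item The variational alternative fails. As $t\to-\infty$ we have $f(t)\sim\lambda_2e^{-Bt}$, so $F(t)=\int_0^t f\to-\infty$ exponentially. Hence the functional is neither convex ($F''=f'<0$ for very negative $t$) nor bounded below on functions supported in $B_R$.
\item Of your boundary-data options, keep the zero-data exhaustion; it is the one your decay and comparison steps actually use. The variant with data $u|_{\partial B_R}$ would additionally require checking $u\le K$.
\end{itemize}
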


\begin{theorem}\label{nagetivetz2tm2}
For $\lambda_1A-\lambda_2B>0$ and any $\delta\in(0,-s_0)$, with $h_3$ defined by \eqref{1.1}, if $4\pi\max_j{n_j}\leq-g(s_0+\delta)$, then the equation
\begin{equation}\label{tz4}
-\Delta u+\lambda_1(e^{Au}-1)
+\lambda_2(e^{-Bu}-1)+h_3=0,
\end{equation}
 admits a solution $\bar{u}$ in $\mathbb{Z}^n$. Moreover,   $\bar{u}$ satisfies $0\geq\bar{u}\geq u$, where $u$ is obtained in Theorem \ref{negativethm2}. Furthermore,  $\bar{u}$ has the same decay estimate as $u$,
\begin{equation*}
\bar{u}=O(e^{-m(1-\epsilon_0)d(x)}),
\end{equation*}
where $m=\log\left(
1+\frac{\lambda_1A-\lambda_2B}{2n}
\right)$ and $0<\epsilon_0<1$.
\end{theorem}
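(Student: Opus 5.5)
We construct $\bar u$ by monotone iteration on an exhaustion, with $u$ from Theorem \ref{negativethm2} as a subsolution and $0$ as a supersolution, and then pass to the limit.

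\textbf{Ordering of the two nonlinearities.} Set
\begin{equation*}
f(t)=\lambda_1(e^{At}-1)+\lambda_2(e^{-Bt}-1).
\end{equation*}
For $t\le 0$ we have $e^{At}\le 1$, and therefore
\begin{equation*}
\lambda_1e^{At}(e^{At}-1)\ge\lambda_1(e^{At}-1)\quad\text{and}\quad \lambda_2e^{-Bt}(e^{-Bt}-1)\ge\lambda_2(e^{-Bt}-1).
\end{equation*}
Hence $g(t)\ge f(t)$ on $(-\infty,0]$. For the nonpositive solution $u$ of \eqref{tz2} this gives
\begin{equation*}
-\Delta u+f(u)+h_3\le -\Delta u+g(u)+h_3=0,
\end{equation*}
so $u$ is a subsolution of \eqref{tz4}. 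The function $0$ is a supersolution, since $-\Delta 0+f(0)+h_3=h_3\ge0$.

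\textbf{Monotone iteration on finite sets.} Take $\Omega_k=\{d(x)<k\}$, which exhausts $\mathbb{Z}^n$. On $[s_0,0]$ the derivative $f'$ is bounded, so we can choose $K$ large enough that $t\mapsto Kt-f(t)$ is increasing there. Starting from $v_0=u$, define $v_{j+1}$ on $\Omega_k$ as the solution of the linear problem
\begin{equation*}
(-\Delta+K)v_{j+1}=Kv_j-f(v_j)-h_3,
\end{equation*}
with boundary data $v_{j+1}=u$ on $\partial\Omega_k$. Alternatively one can use boundary data $0$ and start from the supersolution.

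The linear operator $-\Delta+K$ satisfies the discrete maximum principle, so the sequence $v_j$ is monotone increasing and satisfies $u\le v_j\le0$. It therefore converges to a solution $w_k$ of \eqref{tz4} on $\Omega_k$ with $u\le w_k\le 0$.

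Comparison between exhaustion levels shows that $w_k$ is monotone in $k$. By monotone convergence, $w_k\to\bar u$ pointwise. Because every equation is local and finite, $\bar u$ solves \eqref{tz4} on all of $\mathbb{Z}^n$ and satisfies $u\le\bar u\le0$.

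\textbf{Decay.} The two-sided bound gives $|\bar u|\le|u|$ pointwise, so Theorem \ref{negativethm2} yields directly
\begin{equation*}
\bar u=O(e^{-m(1-\epsilon_0)d(x)}).
\end{equation*}

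\textbf{Main obstacle.} The delicate point is the monotonicity in $k$ together with the choice of boundary data. With boundary value $u$, the limit $w_{k+1}$ restricted to $\Omega_k$ is a supersolution there whose boundary values are $\ge u$. A discrete comparison principle for the nonlinear operator, applied on the bounded range $[s_0,0]$ and using the same $K$, then gives $w_{k+1}\ge w_k$ on $\Omega_k$. If this comparison proves awkward, a fallback is to avoid monotonicity altogether: the $w_k$ are uniformly bounded in $[s_0,0]$, so a diagonal subsequence converges pointwise, and local equations still pass to the limit.
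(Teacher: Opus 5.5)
Your proof is correct, and its core coincides with the paper's. In both, the inequality $F_1\ge F_2$ makes the solution $u$ of Theorem \ref{negativethm2} a subsolution of \eqref{tz4}; the paper writes this as the identity $F_1-F_2=\lambda_1(e^{At}-1)^2+\lambda_2(e^{-Bt}-1)^2$, which you also prove. In both, $0$ is a supersolution, a shifted monotone iteration produces a solution trapped in $[u,0]$, and the decay follows from $|\bar u|\le|u|$.

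The difference is where the iteration runs. The paper does not use finite domains for this theorem. It iterates directly on $\mathbb{Z}^n$ in $c_0(\mathbb{Z}^n)$, inverting $\Delta-L$ by the Neumann series of Lemma \ref{l5.1}, and starts from the supersolution $0$ to get a decreasing sequence. The lower bound $u\le u_{k+1}$ then comes from the maximum principle on all of $\mathbb{Z}^n$. This requires $u-u_{k+1}\to0$ at infinity, so the decay of $u$ is already used in the construction.

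You instead iterate upward from $u$ on finite balls with Dirichlet data $u$ and then exhaust. This is the strategy the paper used for Theorems \ref{positivethm1} and \ref{negativethm2}. Your version needs only finite-dimensional linear solvability and Lemma \ref{maxprinciple}, and it uses the decay of $u$ only for the final estimate. The price is the cross-level comparison $w_{k+1}\ge w_k$, which the paper's whole-space iteration avoids.

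That comparison is sound, and the cleanest way to write it is by induction on $j$. Show that the $j$-th iterate on $\Omega_k$ stays below $w_{k+1}$ on $\bar\Omega_k$. The base case is $u\le w_{k+1}$. The induction step uses two facts: $t\mapsto Kt-f(t)$ is increasing on $[s_0,0]$, and $w_{k+1}\ge u$ on $\partial\Omega_k$. Then let $j\to\infty$. Your diagonal-subsequence fallback would also suffice, since the bounds $u\le w_k\le 0$ are uniform in $k$.
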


\begin{rmk} Let $$ \mu:=\lambda_1A-\lambda_2B>0, \qquad m:=\log\left(1+\frac{\mu}{2n}\right). $$
The difference between the decay rates in
Theorems \ref{positivethm1} and \ref{positivetz2thm1} and those in Theorems \ref{negativethm2}
and \ref{nagetivetz2tm2} is due to the sign of the solutions. For the nonnegative solutions in Theorems \ref{positivethm1} and
\ref{positivetz2thm1}, the corresponding nonlinear terms satisfy
$$ \frac{\text{the corresponding nonlinear
term}}{t}\geq \mu, \qquad t>0. $$ Therefore,
outside a finite set, we obtain $$ \Delta u\geq
\mu u. $$ Hence the full linearized coefficient $\mu$ can be used in the comparison argument,
which gives $$ u(x)=O(e^{-m d(x)}). $$ Thus,
there is no loss in the exponential decay rate.
For the negative solutions in Theorems \ref{negativethm2}
and \ref{nagetivetz2tm2}, however, the corresponding nonlinear
quotient satisfies $$
\lim_{t\to0^-}\frac{\text{the corresponding nonlinear term}}{t}=\mu, $$ but approaches $\mu$ from below. Consequently, in general one cannot
use $\mu$ as a uniform lower bound on the
negative side. Instead, one has to choose a constant $c_{\epsilon_0}<\mu$, for example, $$ c_{\epsilon_0} = 2n\left[ \left(1+\frac{\mu}{2n}\right)^{1-\epsilon_0}-1 \right]. $$ The corresponding exponential rate is $$ \log\left(1+\frac{c_{\epsilon_0}}{2n}\right) = (1-\epsilon_0)m. $$ Therefore, the negative solutions satisfy $$ u(x)=O\left(e^{-m(1-\epsilon_0)d(x)}\right). $$ Hence, Theorems \ref{positivethm1} and \ref{positivetz2thm1} have no loss in the exponential decay rate, whereas Theorems \ref{negativethm2}
and \ref{nagetivetz2tm2} have a loss due to the behavior of the nonlinear terms on the negative side. \end{rmk}
\section{Preliminaries}
In this section, we recall the following maximum principle, which will be used
repeatedly throughout the paper.
\begin{lemma}[Maximum principle]\label{maxprinciple}
Let $\Omega\subset\mathbb Z^n$ be a finite subset, and let $c\in C(\bar\Omega)$ be positive. Suppose that $u\in C(\bar\Omega)$ satisfies
\begin{equation*}
\left\{
\begin{array}{lll}
(\Delta-c)u\geq0\ \text{on}\ \Omega,\\[10pt]
u\leq0\ \text{on}\ \partial\Omega.
\end{array}
\right.
\end{equation*}
Then $u\leq0$ on $\bar\Omega$.
\end{lemma}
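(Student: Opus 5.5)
The argument is by contradiction, looking at where $u$ attains its maximum on the finite set. Since $\bar\Omega$ is finite, $u$ attains a maximum over $\bar\Omega$; set $M:=\max_{\bar\Omega}u$ and suppose $M>0$. Because $u\le0$ on $\partial\Omega$, every maximum point lies in $\Omega$. Fix one such point $x_0\in\Omega$ with $u(x_0)=M>0$.

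Next, evaluate the differential inequality at $x_0$. All neighbours $y\sim x_0$ lie in $\bar\Omega=\Omega\cup\partial\Omega$, since $\partial\Omega$ contains every neighbour of $\Omega$ lying outside $\Omega$. Hence $u(y)\le M=u(x_0)$ for each such $y$, and therefore
\begin{equation*}
\Delta u(x_0)=\sum_{y\sim x_0}\bigl(u(y)-u(x_0)\bigr)\le 0 .
\end{equation*}
The hypothesis $(\Delta-c)u(x_0)\ge0$ then gives $c(x_0)u(x_0)\le\Delta u(x_0)\le0$. But $c(x_0)>0$ and $u(x_0)>0$, so $c(x_0)u(x_0)>0$, which is a contradiction. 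Hence $M\le0$, that is, $u\le0$ on $\bar\Omega$.

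The only point needing care is that the Laplacian at $x_0$ involves only values of $u$ on $\bar\Omega$, so that the maximum over $\bar\Omega$ really controls every term in the sum. This holds by the definition of $\partial\Omega$. The strict positivity of $c$ is what makes the weak inequality $\Delta u(x_0)\le 0$ enough, so no strong maximum principle or connectedness argument is required.
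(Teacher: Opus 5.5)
Your proof is correct and complete. Because $u\le 0$ on $\partial\Omega$, a positive maximum over the finite set $\bar\Omega$ must be attained in $\Omega$. Every neighbour of that point lies in $\bar\Omega$, so $\Delta u(x_0)\le 0$, and the strict positivity of $c$ then contradicts $(\Delta-c)u(x_0)\ge 0$.

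The paper does not prove this lemma; it only recalls it as standard. Your argument is the standard one, and it is the same maximum-point evaluation the paper uses repeatedly, e.g.\ in the proof of Lemma~\ref{ukbounded}.
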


\section{The proof of Theorem \ref{positivethm1}}
Let $u_0=0$ and choose a sufficiently large constant $L>0$. We consider the following iterative  equations,
\begin{equation}\label{iteraequa}
\left\{
\begin{array}{llll}
(\Delta-L)u_{k+1}
=\lambda_1e^{Au_{k}}(e^{Au_{k}}-1)
+\lambda_2e^{-Bu_{k}}(e^{-Bu_{k}}-1)-h_3-Lu_{k}
\ \text{in}\ \Omega,\\[10pt]
u_{k+1}=0\ \text{on}\ \partial\Omega.
\end{array}
\right.
\end{equation}
The following lemma is proved by mathematical induction.
\begin{lemma}\label{ukbounded}
Assume that the sequence  $\{u_k\}$ satisfies the iterative  equations \eqref{iteraequa}.
Then $u_k$ is uniquely defined for any $k$. Moreover,
\begin{equation*}
0\leq u_k\leq M_0
\end{equation*}
and
\begin{equation*}
0=u_0\leq u_1\leq u_2\leq\cdots\leq u_k\leq\cdots,
\end{equation*}
where $M_0$ satisfies
$\lambda_1e^{AM_0}(e^{AM_0}-1)
+\lambda_2e^{-BM_0}(e^{-BM_0}-1)=4\pi\max\limits_jn_j$.
\end{lemma}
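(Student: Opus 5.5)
We argue by induction on $k$, using the monotone-iteration structure of \eqref{iteraequa}. Write
\[
f(t)=\lambda_1e^{At}(e^{At}-1)+\lambda_2e^{-Bt}(e^{-Bt}-1).
\]
Each step is a linear Dirichlet problem on the finite set $\Omega$ with operator $\Delta-L$. Lemma \ref{maxprinciple} with $c\equiv L>0$ makes this operator injective on functions vanishing on $\partial\Omega$. Since $\Omega$ is finite, the problem is a square linear system, so injectivity gives existence and uniqueness of $u_{k+1}$ once $u_k$ is given. The constant $L$ is fixed at the start so large that $t\mapsto Lt-f(t)$ is nondecreasing on $[0,M_0]$, namely $L\ge\max_{[0,M_0]}f'$.

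We also need $M_0$ to be well defined. Here $f(0)=0$ and $f\to\infty$ as $t\to\infty$, so some $M_0>0$ solves $f(M_0)=4\pi\max_j n_j$. On $t>0$ we have $f'(t)=\lambda_1A(2e^{2At}-e^{At})-\lambda_2B(2e^{-2Bt}-e^{-Bt})$. Since $e^{At}\ge 1$ and $\lambda_1A\ge\lambda_2B$, the first term dominates the second, so $f$ is increasing on $[0,\infty)$. In particular $f\ge0$ there, and $M_0$ is unique.

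\emph{Base step: $0=u_0\le u_1\le M_0$.} Since $u_0=0$ and $f(0)=0$, the equation reads $(\Delta-L)u_1=-h_3\le0$ in $\Omega$ with $u_1=0$ on $\partial\Omega$. Applying Lemma \ref{maxprinciple} to $-u_1$ gives $u_1\ge0$. For the upper bound, set $w=u_1-M_0$. Then $(\Delta-L)w=-h_3+LM_0$ and $w=-M_0\le0$ on $\partial\Omega$. Using $h_3\le 4\pi\max_jn_j=f(M_0)$,
\[
(\Delta-L)w\ \ge\ LM_0-f(M_0)\ \ge\ L\cdot 0-f(0)=0,
\]
where the middle inequality is the monotonicity of $Lt-f(t)$. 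Lemma \ref{maxprinciple} then gives $w\le0$.

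\emph{Inductive step.} Assume $0\le u_{k-1}\le u_k\le M_0$. Subtracting consecutive equations,
\[
(\Delta-L)(u_{k+1}-u_k)=-\bigl[(Lu_k-f(u_k))-(Lu_{k-1}-f(u_{k-1}))\bigr]\le0 \quad\text{in }\Omega,
\]
with zero boundary values. Lemma \ref{maxprinciple} applied to $u_k-u_{k+1}$ gives $u_{k+1}\ge u_k\ge0$. For the upper bound, $w=u_{k+1}-M_0$ satisfies
\[
(\Delta-L)w=f(u_k)-Lu_k-h_3+LM_0\ \ge\ (LM_0-f(M_0))-(Lu_k-f(u_k))\ \ge\ 0,
\]
using $h_3\le f(M_0)$ and the monotonicity of $Lt-f(t)$ together with $u_k\le M_0$. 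Since $w\le0$ on $\partial\Omega$, we get $u_{k+1}\le M_0$. Extending by $0$ outside $\overline\Omega$, all statements hold.

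The main obstacle is only the bookkeeping of signs. One must choose $L$ depending on $M_0$ so that $Lt-f(t)$ is monotone on $[0,M_0]$. One must also confirm that $f$ is increasing on $[0,\infty)$ under $\lambda_1A-\lambda_2B\ge0$; this is what guarantees that $M_0$ is well defined and that $h_3\le f(M_0)$ pointwise.
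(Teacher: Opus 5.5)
Your proof is correct and follows essentially the same route as the paper: monotone iteration with $L$ chosen so that $t\mapsto Lt-f(t)$ is nondecreasing on $[0,M_0]$, where nonnegativity, the bound $u_k\le M_0$, and monotonicity all come from the maximum principle. The differences are cosmetic: you compare $u_{k+1}$ with the constant $M_0$ via Lemma \ref{maxprinciple} instead of evaluating the equation at a maximum point, and you explicitly justify two things the paper takes for granted, namely the unique solvability of each linear step and the well-definedness of $M_0$ (monotonicity of $f$ on $[0,\infty)$ when $\lambda_1A\ge\lambda_2B$).
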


\begin{proof}
First of all, for $k=1$,  equation \eqref{iteraequa} becomes
\begin{equation}\label{1}
\left\{
\begin{array}{lll}
(\Delta-L)u_1=-h_3\ \text{in}\ \Omega,\\[10pt]
u_1=0\ \text{on}\ \partial\Omega.
\end{array}
\right.
\end{equation}
Thus, we obtain the existence and uniqueness of the solution $u_1$ on $\Omega$. Next, we prove
$u_1\leq M_0$. Set $\max\limits_{\bar{\Omega}} u_1=u_1(x_0)$. Then
we have
\begin{equation*}
(\Delta-L)u_1(x_0)\leq-Lu_1(x_0)<0.
\end{equation*}
Therefore, equation \eqref{iteraequa} yields
\begin{equation*}
u_1(x_0)\leq 4\pi n_j/L.
\end{equation*}
Because $L$ is sufficiently large, we know that
\begin{equation*}
u_1\leq M_0.
\end{equation*}
Suppose $0\leq u_k \leq M_0$ and that
$\min\limits_{\bar{\Omega}} u_{k+1}=u_{k+1}(x_0)<0$. Then
\begin{equation}\label{2}
(\Delta-L)u_{k+1}(x_0)\geq-Lu_{k+1}(x_0)>0.
\end{equation}
Since $\lambda_1A-\lambda_2B\ge0$ and
$\lambda_1e^{Au_{k}}(e^{Au_{k}}-1)
+\lambda_2e^{-Bu_{k}}(e^{-Bu_{k}}-1)-Lu_{k}$ is decreasing on $[0,M_0]$ for sufficiently large $L$, equation  \eqref{iteraequa}  gives
\begin{equation*}
\begin{array}{lll}
&\lambda_1e^{Au_{k}}(e^{Au_{k}}-1)
+\lambda_2e^{-Bu_{k}}(e^{-Bu_{k}}-1)-h_3-Lu_{k}
\\[12pt]
&\leq
-h_3(x_0)\\[12pt]
&<0.
\end{array}
\end{equation*}
This contradicts \eqref{2}. Hence, $u_{k+1}\geq0.$

Set $\max\limits_{\bar{\Omega}} u_{k+1}=u_{k+1}(x_0)=N$. Next, we show $N\leq M_0$. Computing the right-hand side of \eqref{iteraequa}, we obtain
\begin{equation}\label{3}
\begin{array}{lll}
&\lambda_1e^{Au_{k}}(e^{Au_{k}}-1)
+\lambda_2e^{-Bu_{k}}(e^{-Bu_{k}}-1)-h_3-Lu_{k}\\[12pt]
&\geq
\lambda_1e^{AM_0}(e^{AM_0}-1)
+\lambda_2e^{-BM_0}(e^{-BM_0}-1)-h_3(x_0)-LM_0.
\end{array}
\end{equation}
On the other hand, from the left-hand side of \eqref{iteraequa}, we obtain
\begin{equation}\label{4}
(\Delta-L)u_{k+1}(x_0)
\leq-Lu_{k+1}(x_0)=-LN.
\end{equation}
Combining \eqref{3} and \eqref{4}, we conclude that
\begin{equation*}
0\leq4\pi\max\limits_jn_j-h_3(x_0)
\leq L(M_0-N).
\end{equation*}
Thus, $N\leq M_0$. Hence, $u_{k+1}\leq M_0$.
It follows that
\begin{equation}\label{5}
0\leq u_k\leq M_0 \quad\text{for any}\quad k\geq1.
\end{equation}
Next, we prove that the sequence $\{u_k\}$ is nondecreasing. Suppose that $0=u_0\leq u_1\leq u_2\leq\cdots\leq u_k$. Since
\begin{equation*}
\lambda_1e^{Au_{k}}(e^{Au_{k}}-1)
+\lambda_2e^{-Bu_{k}}(e^{-Bu_{k}}-1)-h_3-Lu_{k}\\[12pt]
\in l^2(\Omega),
\end{equation*}
there exists a unique solution $u_{k+1}$. On the other hand, combining \eqref{5} with the assumption that $L$ is sufficiently large, we obtain
\begin{equation*}
\begin{array}{lll}
(\Delta-L)(u_{k+1}-u_k)
&=
\lambda_1e^{Au_{k}}(e^{Au_{k}}-1)
+\lambda_2e^{-Bu_{k}}(e^{-Bu_{k}}-1)\\[12pt]
&-
\lambda_1e^{Au_{k-1}}(e^{Au_{k-1}}-1)
-\lambda_2e^{-Bu_{k-1}}(e^{-Bu_{k-1}}-1)\\[12pt]
&-L(u_{k}-u_{k-1})\\[12pt]
&\leq 0.
\end{array}
\end{equation*}
We also have $u_{k+1}-u_k=0$ on $\partial\Omega$.
Hence, it follows from the maximum principle (Lemma \ref{maxprinciple}) that $u_k\leq u_{k+1}$.
This completes the proof of Lemma \ref{ukbounded}.
\end{proof}

From Lemma \ref{ukbounded}, we know that
there exists $u_{\Omega}\in C(\overline{\Omega})$ such that
\begin{equation*}
u_k\rightarrow u_{\Omega}\quad\text{on}\quad\Omega
\end{equation*}
and $u_{\Omega}$ satisfies the equation
\begin{equation}\label{finitesolution}
\left\{
\begin{array}{llll}
\Delta u_{\Omega}
=\lambda_1e^{Au_{\Omega}}(e^{Au_{\Omega}}-1)
+\lambda_2e^{-Bu_{\Omega}}(e^{-Bu_{\Omega}}-1)-h_3
\ \text{in}\ \Omega,\\[10pt]
u_{\Omega}=0\ \text{on}\ \partial\Omega.
\end{array}
\right.
\end{equation}
Let $\{\Omega_k\}_{k=0}^{\infty}$ be a sequence of finite connected subsets of $\mathbb{Z}^n$ such that
\begin{equation*}
\{p_j\}_{j=1}^M\subset\Omega_0\subset\Omega_1
\subset\cdots\subset\Omega_k\subset\cdots,
\qquad
\bigcup_{k=0}^{\infty}\Omega_k=\mathbb{Z}^n.
\end{equation*}

We next pass from the solutions on finite domains to a global solution on $\mathbb Z^n$ by an exhaustion argument.
\begin{lemma}\label{monotoneinZn}
For $\lambda_1A-\lambda_2B\geq0$,
there exists a positive function $\tilde{u}\in l^{\infty}(\mathbb{Z}^n)$ that  satisfies the
Tzitz\'eica  equation,
\begin{equation*}
-\Delta\tilde{u}+\lambda_1e^{A\tilde{u}}(e^{A\tilde{u}}-1)
+\lambda_2e^{-B\tilde{u}}(e^{-B\tilde{u}}-1)-h_3=0
\quad \text{on} \quad \mathbb{Z}^n.
\end{equation*}
\end{lemma}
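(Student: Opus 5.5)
For each $k$ we apply the finite-domain construction with $\Omega=\Omega_k$. Let $u_{\Omega_k}$ be the limit of the iteration \eqref{iteraequa}, extended by zero outside $\Omega_k$, so it solves \eqref{finitesolution} on $\Omega_k$. The idea is that these solutions increase with $k$ and are uniformly bounded; monotone convergence then gives the global solution.

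First, the uniform bound. By Lemma \ref{ukbounded}, every iterate satisfies $0\le u_j\le M_0$, so $0\le u_{\Omega_k}\le M_0$ for all $k$. The key point is that $M_0$ depends only on $\lambda_1,\lambda_2,A,B$ and $\max_j n_j$, and not on $\Omega_k$. The constant $L$ may also be fixed once for all $k$, since the monotonicity requirement on $[0,M_0]$ is domain independent.

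Second, monotonicity in $k$, which I expect to be the main obstacle. I claim $u_{\Omega_k}\le u_{\Omega_{k+1}}$ on $\overline{\Omega_k}$, and I would prove it at the level of the iterates. Denote by $u_j^{(k)}$ the iterates on $\Omega_k$, and show by induction on $j$ that $u_j^{(k)}\le u_j^{(k+1)}$ on $\overline{\Omega_k}$. The case $j=0$ is trivial. For the inductive step, set $w=u_{j+1}^{(k)}-u_{j+1}^{(k+1)}$ and write $f(t)=\lambda_1e^{At}(e^{At}-1)+\lambda_2e^{-Bt}(e^{-Bt}-1)-Lt$, which is decreasing on $[0,M_0]$. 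On $\Omega_k$ we have
\begin{equation*}
(\Delta-L)w=f(u_j^{(k)})-f(u_j^{(k+1)})\ge0,
\end{equation*}
since $u_j^{(k)}\le u_j^{(k+1)}$ and both lie in $[0,M_0]$. On $\partial\Omega_k$ we have $w=0-u_{j+1}^{(k+1)}\le 0$, because $u_{j+1}^{(k+1)}\ge0$. Lemma \ref{maxprinciple} then gives $w\le0$. Letting $j\to\infty$ yields $u_{\Omega_k}\le u_{\Omega_{k+1}}$. Outside $\Omega_k$ the inequality is trivial, because $u_{\Omega_k}=0\le u_{\Omega_{k+1}}$.

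Third, passage to the limit. For each $x\in\mathbb Z^n$ the sequence $u_{\Omega_k}(x)$ is eventually nondecreasing and bounded by $M_0$, so it converges to some $\tilde u(x)\in[0,M_0]$; in particular $\tilde u\in l^\infty(\mathbb Z^n)$. Fix $x$. For all large $k$, both $x$ and its $2n$ neighbours lie in $\overline{\Omega_k}$, and $x\in\Omega_k$. The equation at $x$ involves only finitely many values, and the nonlinearity is continuous, so we may pass to the limit pointwise. Hence $\tilde u$ solves the equation on all of $\mathbb Z^n$.

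Finally, positivity. We have $\tilde u\ge u_{\Omega_0}\ge0$. Moreover $\tilde u\not\equiv0$, because $0$ is not a solution when $h_3\neq0$: at $p_1$ one would get $-4\pi n_1=0$. If strict positivity is wanted, suppose $\tilde u(x)=0$ at some $x$. The equation at $x$ gives $-\Delta\tilde u(x)=h_3(x)\ge0$, while $\tilde u\ge0$ forces $\Delta\tilde u(x)\ge0$. Hence every neighbour of $x$ also vanishes, and $h_3(x)=0$. By connectedness of $\mathbb Z^n$ this propagates to give $\tilde u\equiv0$, a contradiction. So $\tilde u>0$ everywhere.
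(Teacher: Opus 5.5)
Your proof is correct and follows the paper's argument: the same induction on the iteration index compares the iterates on $\Omega_k$ and $\Omega_{k+1}$ via the maximum principle, using that $f$ is decreasing on $[0,M_0]$ and that the larger-domain iterate is $\ge 0$ on $\partial\Omega_k$, followed by zero extension and monotone convergence under the domain-independent bound $M_0$. Your last step, which propagates zeros through neighbours to get strict positivity $\tilde u>0$, is a small addition the paper does not make; the paper's proof only yields $\tilde u\ge 0$.
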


\begin{proof}
Let $u_{k,i}$ denote the $i$-th iterate associated with the domain $\Omega_k$.
We consider the following problem.
\begin{equation}\label{iterzn}
\left\{
\begin{array}{llll}
(\Delta-L)u_{k,i}
=\lambda_1e^{Au_{k,i-1}}(e^{Au_{k,i-1}}-1)
+\lambda_2e^{-Bu_{k,i-1}}(e^{-Bu_{k,i-1}}-1)-h_3-Lu_{k,i-1}
\ \text{in}\ \Omega_k,\\[10pt]
u_{k,i}=0\ \text{on}\ \partial\Omega_k.
\end{array}
\right.
\end{equation}
By Lemma \ref{ukbounded}, we have
\begin{equation}\label{6}
0\leq u_{k,i}\leq u_{k,i+1}\leq M_0.
\end{equation}
Moreover, $u_{k,i}\rightarrow u_k$ as $i\rightarrow+\infty$, and $u_k$ satisfies the equation \eqref{finitesolution}.
Next we show that
\begin{equation*}
u_{k+1,i+1}\geq u_{k,i+1}.
\end{equation*}
First, we observe that
\begin{equation}\label{7}
\left\{
\begin{array}{lll}
(\Delta-L)u_{k,1}=-h_3\ \text{in}\ \Omega_k,\\[10pt]
u_{k,1}=0\ \text{on}\ \partial\Omega_k,
\end{array}
\right.
\end{equation}
and
\begin{equation}\label{8}
\left\{
\begin{array}{lll}
(\Delta-L)u_{k+1,1}=-h_3\ \text{in}\ \Omega_k,\\[10pt]
u_{k+1,1}\geq0\ \text{on}\ \partial\Omega_k.
\end{array}
\right.
\end{equation}
Combining \eqref{7} with \eqref{8} yields
\begin{equation}\label{9}
\left\{
\begin{array}{lll}
(\Delta-L)(u_{k+1,1}-u_{k,1})=0\ \text{in}\ \Omega_k,\\[10pt]
u_{k+1,1}-u_{k,1}\geq0\ \text{on}\ \partial\Omega_k.
\end{array}
\right.
\end{equation}
By the maximum principle, we obtain
\begin{equation*}
u_{k+1,1}-u_{k,1}\geq0 \quad\text{in} \quad\Omega_k.
\end{equation*}
Suppose that
$u_{k+1,i}-u_{k,i}\geq0$ in $\Omega_k$.
Consider the following two equations:
\begin{equation}\label{10}
\left\{
\begin{array}{llll}
(\Delta-L)u_{k+1,i+1}
=\lambda_1e^{Au_{k+1,i}}(e^{Au_{k+1,i}}-1)
+\lambda_2e^{-Bu_{k+1,i}}(e^{-Bu_{k+1,i}}-1)-h_3-Lu_{k+1,i}
\ \text{in}\ \Omega_k,\\[10pt]
u_{k+1,i+1}\geq0\ \text{on}\ \partial\Omega_k,
\end{array}
\right.
\end{equation}
and
\begin{equation}\label{11}
\left\{
\begin{array}{llll}
(\Delta-L)u_{k,i+1}
=\lambda_1e^{Au_{k,i}}(e^{Au_{k,i}}-1)
+\lambda_2e^{-Bu_{k,i}}(e^{-Bu_{k,i}}-1)-h_3-Lu_{k,i}
\ \text{in}\ \Omega_k,\\[10pt]
u_{k,i+1}=0\ \text{on}\ \partial\Omega_k.
\end{array}
\right.
\end{equation}
Set $f(x)=\lambda_1e^{Ax}(e^{Ax}-1)
+\lambda_2e^{-Bx}(e^{-Bx}-1)-Lx$.
Since $L$ is sufficiently large, the function $f(x)$ is decreasing on $[0,M_0]$.
It follows from \eqref{10} and \eqref{11} that
\begin{equation}\label{12}
\left\{
\begin{array}{lll}
(\Delta-L)(u_{k+1,i+1}-u_{k,i+1})
=f(u_{k+1,i})-f(u_{k,i})\leq0\ \text{in}\ \Omega_k,\\[10pt]
u_{k+1,i+1}-u_{k,i+1}\geq0\ \text{on}\ \partial\Omega_k.
\end{array}
\right.
\end{equation}
Therefore, we have $u_{k+1,i+1}\geq u_{k,i+1}$. Hence,
\begin{equation*}
u_{k+1,i+1}\geq u_{k,i+1}\quad\text{on}\quad\Omega_k.
\end{equation*}
Letting $i\rightarrow+\infty$, we obtain
\begin{equation*}
u_{k+1}\geq u_k\quad\text{on}\quad\Omega_k.
\end{equation*}
Set
\begin{equation}\label{3.16}
\tilde{u}_{k}=
\left\{
\begin{array}{lll}
u_k,\quad\text{in}\quad\Omega_k,\\[12pt]
0,\ \ \quad\text{in}\quad\mathbb{Z}^n\backslash\Omega_k.
\end{array}
\right.
\end{equation}
Then
\begin{equation}\label{monotoznuk}
\tilde{u}_{k+1}\geq\tilde{u}_{k}\quad\text{on}\quad
\mathbb{Z}^n.
\end{equation}
Furthermore, it follows from \eqref{5} that
$0\leq \tilde{u}_{k}\leq M_0$. Consequently,
\begin{equation*}
\tilde{u}_{k}\rightarrow \tilde{u}
\end{equation*}
as $k\rightarrow+\infty$, and $\tilde{u}$ satisfies the Tzitz\'eica  equation,
\begin{equation}\label{solutionzn}
-\Delta \tilde{u}+\lambda_1e^{A\tilde{u}}(e^{A\tilde{u}}-1)
+\lambda_2e^{-B\tilde{u}}(e^{-B\tilde{u}}-1)-h_3=0
\quad \text{on} \quad \mathbb{Z}^n.
\end{equation}
\end{proof}
Having established the existence of a bounded nonnegative solution, we next derive its exponential decay estimate.
\begin{lemma}\label{positivedecayestimate}
For $\lambda_1A-\lambda_2B>0$, the function $\tilde{u}$ in Lemma \ref{monotoneinZn}  has the
decay estimate
\begin{equation*}
\tilde{u}=O(e^{-md(x)}),
\end{equation*}
where $m=\log\left(
1+\frac{\lambda_1A-\lambda_2B}{2n}
\right)$.
\end{lemma}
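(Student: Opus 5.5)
The plan is a comparison argument on the complement of a large box, using the exponential barrier $w(x)=Ce^{-md(x)}$. The key algebraic fact is that, with $\mu=\lambda_1A-\lambda_2B>0$, the nonlinearity
\[
f_0(t)=\lambda_1e^{At}(e^{At}-1)+\lambda_2e^{-Bt}(e^{-Bt}-1)
\]
satisfies $f_0(0)=0$ and $f_0(t)\ge \mu t$ for $t\ge0$. To check this, expand: $f_0'(t)=\lambda_1A(2e^{2At}-e^{At})+\lambda_2B(e^{-Bt}-2e^{-2Bt})$. For $t\ge0$ the first term is at least $\lambda_1A$, since $2s^2-s\ge1$ for $s\ge1$. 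For the second term, $s-2s^2\ge -1$ for $s\in(0,1]$, because $2s^2-s\le1$ there. Hence $f_0'(t)\ge\mu$ on $[0,\infty)$, and integrating gives $f_0(t)\ge\mu t$.

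Since $h_3$ is supported on the finite set $\{p_j\}$, we then have $\Delta\tilde u=f_0(\tilde u)\ge\mu\tilde u$ at every $x\notin\{p_j\}$, because $\tilde u\ge0$ by Lemma \ref{monotoneinZn}.

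Next we verify that $w$ is a supersolution of the linear operator away from the origin. Fix $R$ large, so that all $p_j$ lie in $\{d(x)<R\}$. At a point $x$ with $d(x)\ge1$, among its $2n$ neighbours at least one has $d=d(x)-1$, and the others have $d=d(x)\pm1$. The worst case is the one with the most neighbours closer to the origin, namely $n$ of them at $d(x)-1$ and $n$ at $d(x)+1$. It is convenient to avoid this bookkeeping by working coordinatewise. On the region where all coordinates are nonzero, every neighbour has distance exactly $d(x)\pm1$. Each coordinate direction contributes one neighbour of each kind, so
\[
\Delta w(x)=w(x)\,n\,(e^{m}+e^{-m}-2)\le \mu w(x)
\]
holds as long as $n(e^m+e^{-m}-2)\le\mu$. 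However, near the coordinate hyperplanes a neighbour may be farther in both directions, and then $\Delta w$ only gets larger. The inequality $\Delta w\le \mu w$ therefore has to be checked with the crude bound $\Delta w\le 2n(e^{m}-1)w=\mu w$. This uses the worst case in which all $2n$ neighbours are closer, and $e^m-1=\mu/(2n)$ is exactly the paper's choice of $m$. Indeed, $\Delta w(x)=\sum_{y\sim x}(w(y)-w(x))\le 2n(e^{m}-1)w(x)$, since each $w(y)\le e^{m}w(x)$. So $(\Delta-\mu)w\le0$ everywhere.

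The comparison step goes as follows. Set $\Omega'=\Omega_k\setminus\{d(x)<R\}$ and $v=\tilde u_k-w$, where $\tilde u_k$ are the approximants from \eqref{3.16}. On $\Omega'$ we have $(\Delta-\mu)\tilde u_k\ge0$: there $u_k\ge0$, $h_3=0$, and $\Delta u_k=f_0(u_k)\ge\mu u_k$. Combined with $(\Delta-\mu)w\le0$, this gives $(\Delta-\mu)v\ge0$ on $\Omega'$. On $\partial\Omega'$ there are two kinds of points. Those in $\partial\Omega_k$ have $\tilde u_k=0\le w$. Those with $d(x)<R$ have $\tilde u_k\le M_0\le w$, provided we choose $C=M_0e^{mR}$. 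By Lemma \ref{maxprinciple} with $c=\mu$, we get $\tilde u_k\le w$ on $\Omega'$, and this holds trivially outside $\Omega_k$. Letting $k\to\infty$ gives $\tilde u\le Ce^{-md(x)}$ for $d(x)\ge R$. Together with $\tilde u\ge0$, this yields the claim.

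The main obstacle is the barrier inequality. It requires checking that the crude bound $w(y)\le e^m w(x)$ gives exactly $\mu$ with the paper's $m$, and that the bound holds uniformly, including at the origin. Doing the comparison on the exterior region $\{d\ge R\}$ with $R\ge1$ sidesteps any issue at the origin.
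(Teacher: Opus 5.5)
Your proof is correct and takes essentially the same route as the paper. Both arguments use the linear inequality $\Delta\tilde u_k\ge\mu\tilde u_k$ away from the vortices, the barrier $e^{-md(x)}$ controlled by the crude bound $\Delta e^{-md}\le 2n(e^m-1)e^{-md}=\mu e^{-md}$, a maximum-principle comparison uniform in $k$, and passage to the limit. Your variations are, if anything, cleaner. You derive $f_0(t)\ge\mu t$ from $f_0'\ge\mu$ rather than from the asserted monotonicity of $f_0(t)/t$, and you compare on the finite set $\Omega_k\setminus\{d<R\}$ with the explicit constant $C=M_0e^{mR}$, so Lemma \ref{maxprinciple} applies literally. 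One side remark is backwards: near coordinate hyperplanes $\Delta w$ gets smaller, not larger, but the argument never uses that remark.
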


\begin{proof}
First, we show that $\tilde{u}_{k}$, defined in \eqref{3.16}, is a subsolution of \eqref{solutionzn}.\\
\noindent
(1) If $x\in\Omega_k\setminus \Omega_0$, $y\sim x$ and $y\in\Omega_k\cup\partial\Omega_k$, then we
have
\begin{equation}\label{15}
\Delta\tilde{u}_k(x)
=
\lambda_1e^{A\tilde{u}_k(x)}(e^{A\tilde{u}_k(x)}-1)
+\lambda_2e^{-B\tilde{u}_k(x)}(e^{-B\tilde{u}_k(x)}-1).
\end{equation}
(2) If $x\in\mathbb{Z}^n
\backslash(\Omega_k\cup\partial\Omega_k), y\sim x$ and $y\in\mathbb{Z}^n\backslash\Omega_k$,
then we have $\tilde{u}_k(x)=\tilde{u}_k(y)=0$.
Further, we obtain
\begin{equation}\label{16}
\Delta\tilde{u}_k(x)
=
\lambda_1e^{A\tilde{u}_k(x)}(e^{A\tilde{u}_k(x)}-1)
+\lambda_2e^{-B\tilde{u}_k(x)}(e^{-B\tilde{u}_k(x)}-1)
=0.
\end{equation}
(3) If $x\in\partial\Omega_k,y\sim x$ and $y\in\mathbb{Z}^n$, then we have $\tilde{u}_k(x)=0$.
Moreover,
\begin{equation}\label{17}
\Delta\tilde{u}_k(x)=\sum_{y\sim x}
\left(\tilde{u}_k(y)-\tilde{u}_k(x)\right)
=\sum_{y\sim x}\tilde{u}_k(y)\geq0
\end{equation}
and
\begin{equation}\label{18}
\lambda_1e^{A\tilde{u}_k(x)}(e^{A\tilde{u}_k(x)}-1)
+\lambda_2e^{-B\tilde{u}_k(x)}(e^{-B\tilde{u}_k(x)}-1)
=0.
\end{equation}
Combining \eqref{15}--\eqref{18}, we deduce that
\begin{equation*}
\left\{
\begin{array}{lll}
\Delta\tilde{u}_k(x)
\geq
\lambda_1e^{A\tilde{u}_k(x)}(e^{A\tilde{u}_k(x)}-1)
+\lambda_2e^{-B\tilde{u}_k(x)}(e^{-B\tilde{u}_k(x)}-1)
\quad\text{in}\quad\mathbb{Z}^n\backslash\bar{\Omega}_0,\\[12pt]
\tilde{u}_k(x)=0,\quad d(x)\rightarrow\infty.
\end{array}
\right.
\end{equation*}

Next, it is easy to see that
the function $f(x)=(\lambda_1e^{Ax}(e^{Ax}-1)
+\lambda_2e^{-Bx}(e^{-Bx}-1))/x$ is increasing
for $x>0$. For $x\in \mathbb{Z}^n\backslash\bar{\Omega}_0$, a direct computation shows that
\begin{equation}\label{13}
\begin{aligned}
\displaystyle{\Delta \tilde{u}_{k}}
&\geq\displaystyle{
\lambda_1e^{A\tilde{u}_{k}}(e^{A\tilde{u}_{k}}-1)
+\lambda_2e^{-B\tilde{u}_{k}}(e^{-B\tilde{u}_{k}}-1)
}\\[12pt]
&\geq\lim\limits_{\varepsilon\rightarrow 0^+}
\lambda_1e^{A(\tilde{u}_{k}+\varepsilon)}(e^{A(\tilde{u}_{k}+\varepsilon)}-1)
+\lambda_2e^{-B(\tilde{u}_{k}+\varepsilon)}(e^{-B(\tilde{u}_{k}+\varepsilon)}-1)\\[12pt]
&\geq\lim\limits_{\varepsilon\rightarrow 0^+}\left(\frac{
\lambda_1e^{A(\tilde{u}_{k}+\varepsilon)}(e^{A(\tilde{u}_{k}+\varepsilon)}-1)
+\lambda_2e^{-B(\tilde{u}_{k}+\varepsilon)}(e^{-B(\tilde{u}_{k}+\varepsilon)}-1)}{\tilde{u}_{k}+\varepsilon}
\cdot (\tilde{u}_{k}+\varepsilon)\right)\\[12pt]
&\geq\lim\limits_{\varepsilon\rightarrow 0^+}
\left(\frac{\lambda_1e^{A\varepsilon}(e^{A\varepsilon-1})
+\lambda_2e^{-B\varepsilon}(e^{-B\varepsilon}-1)}{\varepsilon}\cdot (\tilde{u}_{k}+\varepsilon)\right)\\[12pt]
&=
(\lambda_1A-\lambda_2B)\tilde{u}_{k}.
\end{aligned}
\end{equation}
Denote the barrier function by
\begin{equation*}
\phi(x)=-e^{-md(x)},
\end{equation*}
where $m=\log\left(
1+\frac{\lambda_1A-\lambda_2B}{2n}
\right)$.
By the same computation as in \cite[Page 496]{MR5045709}, we have
\begin{equation*}
\begin{array}{lll}
\Delta \phi(x)
&\geq n\left(e^{-m}+e^{m}-2\right)\phi(x)\\[10pt]
&\geq2n\left(
e^{m}-1
\right)\phi(x)\\[12pt]
&=(\lambda_1A-\lambda_2B)\phi(x).
\end{array}
\end{equation*}
We consider the subset
\begin{equation*}
\Omega_0'=\{x\in\mathbb{Z}^n:d(x)\geq R_1\}
\end{equation*}
and $\Omega_0'\cap\bar{\Omega}_0=\emptyset$. Choosing a sufficiently large constant $C$,
we have
\begin{equation*}
(\Delta-(\lambda_1A-\lambda_2B))(C\phi(x)+\tilde{u}_k(x))\geq0\ \ \text{on}\ \ \Omega_0'
\end{equation*}
and
\begin{equation*}
C\phi(x)+\tilde{u}_k(x)\leq0
\quad\text{if}\quad d(x)=R_1,
\end{equation*}
together with
\begin{equation*}
\lim\limits_{|x|\rightarrow+\infty}
\left(C\phi(x)+\tilde{u}_k(x)\right)=0.
\end{equation*}
Applying Lemma \ref{maxprinciple}, we obtain
\begin{equation*}
C\phi(x)+\tilde{u}_k(x)\leq0
\end{equation*}
on $\Omega'_0$. That is,
\begin{equation*}
0\leq\tilde{u}_k(x)\leq Ce^{-md(x)},
\end{equation*}
where $C$  is independent of  $k$.
Passing to the limit as $k\rightarrow+\infty$, we obtain
\begin{equation*}
0\leq \tilde{u}(x)\leq Ce^{-md(x)},
\end{equation*}
where $\tilde{u}$ satisfies \eqref{solutionzn}. This completes the proof of Lemma \ref{positivedecayestimate}.
\end{proof}

\section{The proof of Theorem \ref{negativethm2}}
For $\lambda_1A-\lambda_2B>0$, let $L>0$ be some sufficiently large constant  and set $u_0=0$. Then we consider the iterative equation as follows:
\begin{equation}\label{iteraequanegetive}
\left\{
\begin{array}{llll}
(\Delta-L)u_k
=\lambda_1e^{Au_{k-1}}(e^{Au_{k-1}}-1)
+\lambda_2e^{-Bu_{k-1}}(e^{-Bu_{k-1}}-1)+h_3-Lu_{k-1}
\ \text{on}\ \Omega,\\[10pt]
u_k=0\ \text{on}\ \partial\Omega.
\end{array}
\right.
\end{equation}Denote $$f(u):=\frac{\lambda_1e^{A{u}}(e^{A{u}}-1)
+\lambda_2e^{-B{u}}(e^{-B{u}}-1)}{u}.$$
It is easy to see that $f(u)$ is increasing in $(-\infty,0)$ and
\begin{equation*}
\lim\limits_{u\rightarrow 0^-}f(u)=\lambda_1A-\lambda_2B>0.
\end{equation*}
Then $f(s_0)=0$ with $s_0$ defined in Theorem \ref{negativethm2}.

We first establish the monotonicity and uniform boundedness of the iterative sequence.
\begin{lemma}\label{monolemnegative}
Under the assumption of Theorem \ref{negativethm2}, for any $\delta\in (0,-s_0)$,
let $u_k$ satisfy \eqref{iteraequanegetive}. Then for each $k$, $u_k$ is uniquely defined and
\begin{equation*}
0=u_0\geq u_1\geq u_2\geq\cdots.
\end{equation*}
Moreover, $s_0+\delta\leq u_{k}\leq0$.
\end{lemma}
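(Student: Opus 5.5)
I will argue by induction on $k$, in the same way as in Lemma \ref{ukbounded}, with the inequalities reversed. Write
\[
G(t)=g(t)-Lt=\lambda_1e^{At}(e^{At}-1)+\lambda_2e^{-Bt}(e^{-Bt}-1)-Lt .
\]
Fix $L$ so large that $G$ is decreasing on $[s_0+\delta,0]$, i.e. $L\ge \max_{[s_0+\delta,0]}|g'|$. For each $k$ the right-hand side of \eqref{iteraequanegetive} is a given function on the finite set $\Omega$. The operator $\Delta-L$ with zero Dirichlet data is injective on $C(\bar\Omega)$ by Lemma \ref{maxprinciple}, and therefore invertible. This gives existence and uniqueness of each $u_k$.

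\textbf{Step 1: the base case $k=1$.} Here $(\Delta-L)u_1=h_3\ge0$ in $\Omega$ and $u_1=0$ on $\partial\Omega$, so Lemma \ref{maxprinciple} gives $u_1\le0=u_0$. For the lower bound, take a minimum point $x_0$ of $u_1$. There $\Delta u_1(x_0)\ge0$, so $-Lu_1(x_0)\le h_3(x_0)\le 4\pi\max_j n_j$, i.e. $u_1\ge -4\pi\max_jn_j/L\ge s_0+\delta$ once $L$ is large.

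\textbf{Step 2: the lower bound $u_{k+1}\ge s_0+\delta$, assuming $s_0+\delta\le u_k\le0$.} This is the key use of the hypothesis $4\pi\max_jn_j\le -g(s_0+\delta)$. Suppose the minimum of $u_{k+1}$ is attained at $x_0\in\Omega$ with $N:=u_{k+1}(x_0)<s_0+\delta$. Then $(\Delta-L)u_{k+1}(x_0)\ge -LN$. Since $G$ is decreasing and $u_k\ge s_0+\delta$, the right-hand side at $x_0$ satisfies
\[
G(u_k)+h_3\le G(s_0+\delta)+4\pi\max_jn_j=g(s_0+\delta)+4\pi\max_jn_j-L(s_0+\delta)\le -L(s_0+\delta).
\]
Combining the two gives $-LN\le -L(s_0+\delta)$, so $N\ge s_0+\delta$, a contradiction. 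If instead the minimum is on $\partial\Omega$, it equals $0$ and the bound is trivial.

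\textbf{Step 3: the upper bound $u_{k+1}\le0$.} At an interior maximum point $x_0$ with $u_{k+1}(x_0)>0$ we would have $(\Delta-L)u_{k+1}(x_0)<0$. But the right-hand side is $G(u_k)+h_3\ge G(0)+0=0$, because $G$ is decreasing and $u_k\le 0$. This is a contradiction.

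\textbf{Step 4: monotonicity.} Assume $u_k\le u_{k-1}$. Then
\[
(\Delta-L)(u_{k+1}-u_k)=G(u_k)-G(u_{k-1})\ge0,
\]
since $G$ is decreasing and both iterates lie in $[s_0+\delta,0]$. The difference vanishes on $\partial\Omega$, so Lemma \ref{maxprinciple} yields $u_{k+1}\le u_k$.

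The main obstacle is Step 2. It is the only place where the smallness condition on $h_3$ enters, and it requires choosing $L$ (depending only on $\delta$ and the parameters) so that $G$ is monotone on the whole invariant interval $[s_0+\delta,0]$. Once that interval is shown to be invariant, the rest is the routine comparison argument of Lemma \ref{ukbounded}.
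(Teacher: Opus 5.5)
Your proof is correct and follows essentially the same route as the paper. Both arguments use monotone iteration on the finite set $\Omega$, with $L$ chosen so that $g(t)-Lt$ is decreasing on $[s_0+\delta,0]$, a minimum-point argument combined with $4\pi\max_j n_j\le -g(s_0+\delta)$ to keep $u_{k+1}\ge s_0+\delta$, and the maximum principle for monotonicity. The only cosmetic difference is that the paper proves $u_{k+1}\le u_k$ by a mean-value estimate needing only $u_k\le u_{k-1}\le 0$ (and gets $u_{k+1}\le 0$ from that chain), whereas you derive both the upper bound and the monotonicity from the decrease of $G$ on the invariant interval.
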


\begin{proof}
We prove the lemma by induction as follows.
For $k=1$, equation \eqref{iteraequanegetive} becomes
\begin{equation}
\left\{
\begin{array}{lll}
(\Delta-L)u_1=h_3\ \text{on}\ \Omega,\\[10pt]
u_1=0\ \text{on}\ \partial\Omega.
\end{array}
\right.
\end{equation}
Thus, we obtain the existence and uniqueness of the solution $u_1$ on $\Omega$.
It follows from Lemma \ref{maxprinciple} that $u_1\leq0$ on $\bar{\Omega}$. Suppose that $0=u_0\geq u_1\geq u_2\geq \cdots \geq u_k$. Since
\begin{equation*}
\lambda_1e^{Au_{k}}(e^{Au_{k}}-1)
+\lambda_2e^{-Bu_{k}}(e^{-Bu_{k}}-1)+h_3-Lu_{k}\in l^2(\Omega),
\end{equation*}
it follows that there exists a unique solution
$u_{k+1}$ to \eqref{iteraequanegetive}. Furthermore, we have
\begin{equation*}
\begin{array}{lll}
(\Delta-L)(u_{k+1}-u_k)
&=
\lambda_1e^{Au_k}(e^{Au_k}-1)-\lambda_1e^{Au_{k-1}}(e^{Au_{k-1}}-1)\\[10pt]
&\quad+\lambda_2e^{-Bu_k}(e^{-Bu_k}-1)-\lambda_2e^{-Bu_{k-1}}(e^{-Bu_{k-1}}-1)
-L(u_k-u_{k-1})\\[10pt]
&=
\lambda_1(e^{2Au_k}-e^{2Au_{k-1}})-\lambda_1(e^{Au_k}-e^{Au_{k-1}})
+\lambda_2(e^{-2Bu_k}-e^{-2Bu_{k-1}})\\[10pt]
&\quad -\lambda_2(e^{-Bu_k}-e^{-Bu_{k-1}})-L(u_k-u_{k-1})\\[10pt]
&\geq
(2\lambda_1Ae^{2A\xi_1}-L)(u_k-u_{k-1})\\[12pt]
&\geq 0,
\end{array}
\end{equation*}
with $u_k\leq\xi_1\leq u_{k-1}\leq 0$, where $-\lambda_1(e^{Au_k}-e^{Au_{k-1}})\geq 0$ due to $u_k\leq u_{k-1}\leq 0$, and $$\lambda_2(e^{-2Bu_k}-e^{-2Bu_{k-1}})
-\lambda_2(e^{-Bu_k}-e^{-Bu_{k-1}})\geq 0,$$ since the function $e^{-2Bx}-e^{-Bx}$ is decreasing for $x\leq 0$.
Then  Lemma \ref{maxprinciple} implies that $u_{k+1}\leq u_k$ and we  obtain the monotonicity of $u_k$.

Next, we prove the boundedness of $u_k$.
Note that $u_k$ satisfies the following
equation
\begin{equation*}
\Delta{u_k}=\lambda_1e^{A{u_k}}(e^{A{u_k}}-1)
+\lambda_2e^{-B{u_k}}(e^{-B{u_k}}-1)
\quad\text{on}\quad\Omega\setminus{\Omega}_0.
\end{equation*}
 If $u_k\in[s_0+\delta,0)$ for any $\delta\in(0,-s_0)$, then
\begin{equation*}
f(u_k)\geq C=f(s_0+\delta).
\end{equation*}
Next, we prove that $u_k\geq s_0+\delta$.  We claim that $u_k\in[s_0+\delta,0)$. Firstly, we have
\begin{equation*}
\left\{
\begin{array}{lll}
(\Delta-L)u_1=h_3\ \text{on}\ \Omega,\\[10pt]
u_1=0\ \text{on}\ \partial\Omega.
\end{array}
\right.
\end{equation*}
Then $u_1\leq0$. Assume that
$\min\limits_{\bar{\Omega}} u_1=u_1(x_0)=-N$. A direct calculation yields
\begin{equation*}
h_3(x_0)=(\Delta-L)u_1(x_0)\geq-Lu_1(x_0)=LN.
\end{equation*}
This implies
\begin{equation*}
N\leq\frac{4\pi n_j}{L}.
\end{equation*}
Since $L$ is sufficiently large,  we have $N\leq -(s_0+\delta)$. Suppose $s_0+\delta\leq u_k\leq
0$.  We already know $u_{k+1}\leq0$
from the above proof.
Set $u_{k+1}(x_0)=\min\limits_{\bar{\Omega}}u_{k+1}(x)=-N$ and we have
\begin{equation*}
(\Delta-L)u_{k+1}(x_0)\geq-Lu_{k+1}(x_0)=LN.
\end{equation*}
According to the equation \eqref{iteraequanegetive}, we obtain
\begin{equation*}
\begin{array}{lll}
LN
&\leq
\lambda_1e^{Au_{k}(x_0)}(e^{Au_{k}(x_0)}-1)
+\lambda_2e^{-Bu_{k}(x_0)}(e^{-Bu_{k}(x_0)}-1)+h_3(x_0)-Lu_{k}(x_0)\\[12pt]
&\leq
\lambda_1e^{A(s_0+\delta)}(e^{A(s_0+\delta)}-1)
+\lambda_2e^{-B(s_0+\delta)}(e^{-B(s_0+\delta)}-1)+h_3(x_0)-L(s_0+\delta).
\end{array}
\end{equation*}
The last inequality follows from the fact that
$\lambda_1e^{Ax}(e^{Ax}-1)
+\lambda_2e^{-Bx}(e^{-Bx}-1)-Lx$ is decreasing on $[s_0+\delta,0]$ if
 $L$ is sufficiently large. Therefore, we have
\begin{equation*}
N\leq-(s_0+\delta)+\frac{1}{L}
\left(
\lambda_1e^{A(s_0+\delta)}(e^{A(s_0+\delta)}-1)
+\lambda_2e^{-B(s_0+\delta)}(e^{-B(s_0+\delta)}-1)+h_3(x_0)
\right)
\end{equation*}
Since $\lambda_1e^{A(s_0+\delta)}(e^{A(s_0+\delta)}-1)
+\lambda_2e^{-B(s_0+\delta)}(e^{-B(s_0+\delta)}-1)\leq0$ and  $4\pi\max{n_j}\leq-g(s_0+\delta)$, we obtain $N\leq -(s_0+\delta)$. Then, $u_{k+1}\geq (s_0+\delta)$. Consequently, we get 
\begin{equation*}
s_0+\delta\leq u_{k}<0.
\end{equation*}
Thus, we get the desired conclusion.

\end{proof}

From Lemma \ref{monolemnegative}, we can obtain the following lemma directly.

\begin{lemma}\label{finitenegativesolution}
Let the sequence $\{u_k\}$ be defined by \eqref{iteraequanegetive}. Then there  exists
$u_{\Omega}\in C(\bar{\Omega})$ such that
\begin{equation*}
u_k\rightarrow u_{\Omega} \quad\text{as}\quad k\rightarrow+\infty
\end{equation*}
on $\bar{\Omega}$ and $u_{\Omega}$ solves the following problem
\begin{equation}\label{19}
\left\{
\begin{array}{llll}
\Delta u_{\Omega}
=\lambda_1e^{Au_{\Omega}}(e^{Au_{\Omega}}-1)
+\lambda_2e^{-Bu_{\Omega}}(e^{-Bu_{\Omega}}-1)+h_3
\ \text{on}\ \Omega,\\[10pt]
u_{\Omega}=0\ \text{on}\ \partial\Omega.
\end{array}
\right.
\end{equation}
\end{lemma}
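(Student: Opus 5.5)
The limit $u_\Omega$ will come from the monotone convergence theorem on the finite set $\bar\Omega$, followed by passing to the limit in \eqref{iteraequanegetive}. By Lemma \ref{monolemnegative}, for every $x\in\bar{\Omega}$ the real sequence $\{u_k(x)\}$ is nonincreasing and bounded below by $s_0+\delta$. Hence it converges; call the limit $u_\Omega(x)$.

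Since $\bar\Omega$ is finite, this pointwise convergence is automatically uniform. The limit satisfies $s_0+\delta\le u_\Omega\le 0$ on $\bar\Omega$. On $\partial\Omega$ every $u_k$ vanishes, so $u_\Omega=0$ there as well.

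Next I pass to the limit $k\to\infty$ in \eqref{iteraequanegetive} at each fixed $x\in\Omega$. The left-hand side, $(\Delta-L)u_k(x)$, is a finite linear combination of the values $u_k(y)$ with $y\in\{x\}\cup\{y:y\sim x\}\subset\bar\Omega$. It therefore converges to $(\Delta-L)u_\Omega(x)$. On the right-hand side, the map $t\mapsto \lambda_1e^{At}(e^{At}-1)+\lambda_2e^{-Bt}(e^{-Bt}-1)-Lt$ is continuous. Since $u_{k-1}(x)\to u_\Omega(x)$, the right-hand side converges to
\[
\lambda_1e^{Au_\Omega}(e^{Au_\Omega}-1)+\lambda_2e^{-Bu_\Omega}(e^{-Bu_\Omega}-1)+h_3-Lu_\Omega .
\]
Cancelling the term $-Lu_\Omega$ on both sides gives \eqref{19}.

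There is essentially no obstacle here. The only point to check is that all the bounds needed for convergence are already supplied by Lemma \ref{monolemnegative}, and that finiteness of $\Omega$ lets us interchange the limit with $\Delta$ with no further justification.
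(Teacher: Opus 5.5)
Your proposal is correct and takes the same approach as the paper. The paper only states that the lemma follows directly from Lemma~\ref{monolemnegative}; you supply the omitted details: the monotone sequence bounded below by $s_0+\delta$ converges on the finite set $\bar\Omega$, the $(\Delta-L)$ terms and the continuous nonlinearity pass to the limit in \eqref{iteraequanegetive}, and the $-Lu_\Omega$ terms then cancel to give \eqref{19}.
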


We next establish a comparison result for the solutions obtained by the above iteration scheme.

\begin{lemma}\label{solutionmax}
Let $\Omega\subset\mathbb{Z}^n$ be a finite subset and let $\{u_k\}$ satisfy \eqref{iteraequanegetive}. For any $w\in C(\bar{\Omega})$ satisfying
\begin{equation*}
\left\{
\begin{array}{lll}
&\Delta w\geq \lambda_1e^{Aw}(e^{Aw}-1)
+\lambda_2e^{-Bw}(e^{-Bw}-1)+h_3\ \  \text{on}\ \ \Omega\\[10pt]
&w(x)\leq0\ \  \text{on}\ \  \partial\Omega,
\end{array}
\right.
\end{equation*}
we have
\begin{equation*}
  0=u_0\geq u_1\geq u_2\geq\cdots\geq u_k\geq\cdots\geq u_{\Omega}\geq w.
\end{equation*}
\end{lemma}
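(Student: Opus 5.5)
The plan is a comparison argument by induction on $k$, using the monotone structure of the scheme \eqref{iteraequanegetive}. Write
$$F(t)=\lambda_1e^{At}(e^{At}-1)+\lambda_2e^{-Bt}(e^{-Bt}-1),\qquad G_L(t)=F(t)-Lt .$$
The chain $0=u_0\geq u_1\geq\cdots\geq u_\Omega$ is already given by Lemma \ref{monolemnegative} and Lemma \ref{finitenegativesolution}. So it only remains to show $u_k\geq w$ for every $k$, and then let $k\to\infty$.

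\emph{Step 1: $w\leq 0$ on $\bar\Omega$.} On $\partial\Omega$ this is the hypothesis. Suppose $\max_{\bar\Omega}w=w(x_0)>0$. Then $x_0\in\Omega$, and since $x_0$ is a maximum point, $\Delta w(x_0)\leq 0$. On the other hand $F(t)>0$ for $t>0$: the first term is positive, and $e^{-Bt}(e^{-Bt}-1)\geq -\tfrac14$ is dominated since $\lambda_1A-\lambda_2B>0$; I expect this to follow from the facts already used in Section 3, and I would verify it there. Together with $h_3\geq 0$ this gives $\Delta w(x_0)\geq F(w(x_0))+h_3(x_0)>0$, a contradiction. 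Hence $w\in[-K,0]$ with $K:=-\min_{\bar\Omega}w<\infty$, using that $\Omega$ is finite. This also gives the base case $u_0=0\geq w$.

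\emph{Step 2: induction.} Choose $L$ large enough that $G_L$ is nonincreasing on $[\min\{-K,s_0+\delta\},0]$; this is possible because $F'$ is bounded on compact sets. Assume $u_k\geq w$. Subtracting the inequality for $w$ from \eqref{iteraequanegetive} gives
$$(\Delta-L)(u_{k+1}-w)\leq G_L(u_k)-G_L(w)\leq 0\quad\text{on }\Omega,$$
while $u_{k+1}-w=-w\geq 0$ on $\partial\Omega$. Applying Lemma \ref{maxprinciple} to $w-u_{k+1}$ yields $u_{k+1}\geq w$ on $\bar\Omega$. Passing to the limit then gives $u_\Omega\geq w$.

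\emph{Main obstacle.} The constant $L$ in \eqref{iteraequanegetive} was fixed only as large enough for the interval $[s_0+\delta,0]$, whereas Step 2 needs it large relative to $\min w$, which may lie below $s_0+\delta$. A direct maximum-principle comparison of $u_\Omega$ with $w$ does not work, because $F$ is not monotone on the negative axis. I would handle this in two ways.

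First, observe that enlarging $L$ preserves all conclusions of Lemma \ref{monolemnegative}. The lemma is stated for any sufficiently large $L$, so we may take $L$ depending also on $K$.

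Second, show that the limit $u_\Omega$ does not depend on the choice of $L$. Let $u_\Omega^{L}$ and $u_\Omega^{L'}$ be the limits for $L'\geq L$. Each limit solves \eqref{19} and lies in $[s_0+\delta,0]$, so each satisfies the hypothesis on $w$ with $\min\geq s_0+\delta$. Running the Step 2 comparison for each scheme against the other limit gives $u_\Omega^{L}\geq u_\Omega^{L'}$ and $u_\Omega^{L'}\geq u_\Omega^{L}$. Hence the limit is the same, namely the maximal solution below $0$, and the comparison with $w$ holds for the scheme as defined.
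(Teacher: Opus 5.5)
Your proof is correct and follows the paper's skeleton: first show $w\le 0$ by evaluating at a positive interior maximum, then prove $u_k\ge w$ by induction, applying Lemma \ref{maxprinciple} to $w-u_{k+1}$. The two routes differ only in how the constant $L$ is handled.

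\textbf{The ``main obstacle'' does not arise.} The paper never has to face it, because the needed monotonicity of $G_L$ is global on the negative half-line. For $t\le 0$,
$F'(t)=\lambda_1Ae^{At}(2e^{At}-1)+\lambda_2Be^{-Bt}(1-2e^{-Bt})\le\lambda_1A$.
This holds because:
\begin{itemize}
\item the $\lambda_2$-part $e^{-2Bt}-e^{-Bt}$ is decreasing on $(-\infty,0]$;
\item $r(2r-1)\le 1$ for $r=e^{At}\in(0,1]$.
\end{itemize}
Hence $G_L=F-L\,t$ is nonincreasing on all of $(-\infty,0]$ as soon as $L\ge\lambda_1A$, whatever $\min w$ is. The paper uses exactly this: it drops the $\lambda_2$-difference by monotonicity and applies the mean value theorem to the $\lambda_1$-part, with the cruder constant $3A\lambda_1$. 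So the $L$ already fixed in the scheme works for every admissible $w$ simultaneously.

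\textbf{Your detour is valid but unnecessary here.} You enlarge $L$, which does preserve every estimate in Lemma \ref{monolemnegative}. You then show by two-sided comparison that $u_\Omega$ is independent of $L$, and finally use $u_k\ge u_\Omega\ge w$ for the original iterates. This buys something extra: it identifies $u_\Omega$ as the maximal nonpositive solution on $\Omega$. It would also survive for nonlinearities that are only locally Lipschitz on the negative axis. For this particular $F$, however, the paper's direct bound is shorter.

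\textbf{A small fix in Step 1.} Your heuristic that ``$e^{-Bt}(e^{-Bt}-1)\ge-\frac14$ is dominated'' fails for small $t>0$, where $\lambda_1e^{At}(e^{At}-1)$ is also small. The correct justification is that $F(0)=0$ and $F'(t)\ge\lambda_1A-\lambda_2B>0$ for $t\ge0$. This follows from $e^{At}(2e^{At}-1)\ge1$ and $s(1-2s)\ge-1$ for $s=e^{-Bt}\in(0,1]$.
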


\begin{proof}
It is easy to see that  the function
$F_1(x)=\lambda_1e^{Ax}(e^{Ax}-1)
+\lambda_2e^{-Bx}(e^{-Bx}-1)$ is increasing for $x\in(0,\infty)$. Firstly, we claim
 $\sup\limits_{x\in\Omega}w(x)\leq0$.
 Otherwise, we can choose $x_0\in\Omega$ such that $w(x_0)=\sup_{x\in\Omega}w(x)>0$,
which gives
 \begin{equation*}
 0\geq \Delta w(x_0)\geq \lambda_1e^{Aw}(e^{Aw}-1)
+\lambda_2e^{-Bw}(e^{-Bw}-1)+h_3>0.
 \end{equation*}
This is a contradiction and the claim is proved.

We assume $w\leq u_k\leq0$ and calculate
\begin{equation*}
\begin{aligned}
&(\Delta-L)(u_{k+1}-w)\\[10pt]
\leq&
\lambda_1(e^{2Au_k}-e^{2Aw}-e^{Au_k}+e^{Aw})
+\lambda_2(e^{-2Bu_k}-e^{-Bu_k}-e^{-2Bw}+e^{-Bw})-L(u_k-w)\\[10pt]
\leq& \lambda_1(2Ae^{2A\xi_1}(u_k-w)
-Ae^{\xi_2}(u_k-w))-L(u_k-w)\\[10pt]
\leq& (3A\lambda_1-L)(u_k-w)\\[10pt]
\leq& 0.
\end{aligned}
\end{equation*}
Then  Lemma \ref{maxprinciple} implies that
$u_{k+1}\geq w$. Hence,  we get the desired conclusion.
\end{proof}
Let $\{\Omega_i\}_{i=0}^{\infty}$ be a sequence of finite connected subsets of $\mathbb Z^n$, which satisfies
\begin{equation*}
\Omega_0\subset\Omega_1
\subset\cdots\subset\Omega_k\subset\cdots,\  \bigcup_{i=1}^{\infty}\Omega_i=\mathbb{Z}^n.
\end{equation*}
For convenience, we denote $u^i:=u_{\Omega_i}$.
For any $1\leq j\leq k$ with  $\Omega_j\subset\Omega_k$,  Lemma \ref{solutionmax} yields
\begin{equation*}
u^k\leq u^j \quad\text{on}\quad\Omega_j.
\end{equation*}
Define $\widetilde u^k$ by
\begin{equation*}
\tilde{u}^k=
\left\{
\begin{array}{lll}
u^k,\quad\text{in}\quad\Omega_k,\\[12pt]
0,\quad \ \text{in}\quad\mathbb{Z}^n\backslash\Omega_k.
\end{array}
\right.
\end{equation*}
Then we obtain
\begin{equation}\label{monoznnegative}
0\geq\widetilde{u}^1\geq\widetilde{u}^2\geq\cdots
\geq\widetilde{u}^k\geq\cdots \quad\text{on}\quad
\mathbb{Z}^n.
\end{equation}

\begin{lemma}\label{negativesolutionznlemm}
Let $\{\tilde{u}^k\}$ be the sequence defined in \eqref{monoznnegative}. Then there exists a function ${u}\in C(\mathbb{Z}^n)$ such that $\tilde{u}^k\rightarrow{u}$ as $k\rightarrow+\infty$ and ${u}$ satisfies
\begin{equation}\label{04}
-\Delta{u}+\lambda_1e^{A{u}}(e^{A{u}}-1)
+\lambda_2e^{-B{u}}(e^{-B{u}}-1)+h_3=0
\quad \text{on} \quad \mathbb{Z}^n.
\end{equation}
\end{lemma}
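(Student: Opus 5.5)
The argument is a monotone limit combined with pointwise passage to the limit in the equation. The Laplacian on $\mathbb{Z}^n$ is local: it involves only $x$ and its $2n$ neighbours. So pointwise convergence of a uniformly bounded sequence is enough to pass to the limit in the equation at each fixed vertex.

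\emph{Step 1 (existence of the limit).} By Lemma \ref{monolemnegative} and Lemma \ref{finitenegativesolution}, each $u^k=u_{\Omega_k}$ satisfies
\[
s_0+\delta\le u^k\le 0 \quad \text{on } \bar\Omega_k .
\]
Hence $s_0+\delta\le\tilde u^k\le 0$ on all of $\mathbb{Z}^n$. By \eqref{monoznnegative} the sequence $\{\tilde u^k(x)\}$ is nonincreasing for every fixed $x$. A bounded monotone sequence converges, so we may define
\[
u(x):=\lim_{k\to\infty}\tilde u^k(x)\in[s_0+\delta,0].
\]
This gives $u\in C(\mathbb{Z}^n)$ together with the bound stated in Theorem \ref{negativethm2}.

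For the monotonicity \eqref{monoznnegative} I would double-check how Lemma \ref{solutionmax} is applied. Take $w=u^k$ restricted to $\bar\Omega_j$. On $\Omega_j\subset\Omega_k$, $w$ satisfies the equation with equality, so the inequality $\Delta w\ge\cdots$ holds. On $\partial\Omega_j$ we have $w\le 0$, because $u^k\le 0$ everywhere (from Lemma \ref{monolemnegative} on $\Omega_k$, and $u^k=0$ outside). Lemma \ref{solutionmax} then gives $u^j\ge u^k$ on $\Omega_j$. Outside $\Omega_j$, $\tilde u^j=0\ge\tilde u^k$. This is exactly what is already asserted in the paper, so I take \eqref{monoznnegative} as given.

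\emph{Step 2 (passing to the limit in the equation).} Fix $x\in\mathbb{Z}^n$. Since $\bigcup_i\Omega_i=\mathbb{Z}^n$ and the sets are nested, there is $k_0$ with $x\in\Omega_k$ for all $k\ge k_0$. For such $k$, Lemma \ref{finitenegativesolution} gives
\[
\sum_{y\sim x}\bigl(\tilde u^k(y)-\tilde u^k(x)\bigr)=\lambda_1e^{A\tilde u^k(x)}(e^{A\tilde u^k(x)}-1)+\lambda_2e^{-B\tilde u^k(x)}(e^{-B\tilde u^k(x)}-1)+h_3(x).
\]
At points $y\sim x$ lying in $\partial\Omega_k$ we use $u^k(y)=0=\tilde u^k(y)$, so the left-hand side is indeed $\Delta\tilde u^k(x)$. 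The left-hand side is a finite sum over the $2n$ neighbours, so it converges to $\Delta u(x)$. The nonlinearity is continuous, so the right-hand side converges to its value at $u(x)$. Letting $k\to\infty$ yields \eqref{04} at $x$, and since $x$ was arbitrary, on all of $\mathbb{Z}^n$.

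\emph{Main obstacle.} There is no genuine obstacle. The only delicate point is the bookkeeping in Step 2: the identity must be applied only for $k$ large enough that $x$ is an interior point of $\Omega_k$, and the boundary convention $u^k=0$ on $\partial\Omega_k$ must match the zero extension, so that $\Delta\tilde u^k(x)=\Delta u^k(x)$. The uniform lower bound $s_0+\delta$ prevents the limit from being $-\infty$.
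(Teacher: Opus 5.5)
Your proposal is correct and follows the same route as the paper. The paper likewise combines the uniform bound $s_0+\delta\le\tilde u^k\le 0$ from Lemma \ref{monolemnegative} with the monotonicity \eqref{monoznnegative} to get a pointwise limit, and then passes to the limit in the equation; your Step 2 spells out this last passage, which the paper's one-line proof leaves implicit, by taking $k$ large enough that $x\in\Omega_k$, matching the zero extension with the boundary condition $u^k=0$ on $\partial\Omega_k$, and using the locality of $\Delta$ and the continuity of the nonlinearity.
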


\begin{proof}
From Lemma \ref{monolemnegative}, we have
$|\widetilde{u}^k|\leq C$, where $C$ is independent of $k$. Moreover, by \eqref{monoznnegative}, there exists a function
${u}\in C(\mathbb{Z}^n)$ such that
 $\tilde{u}^k\rightarrow{u}$ as $k\rightarrow+\infty$ and ${u}$ satisfies
\begin{equation*}
-\Delta{u}+\lambda_1e^{A{u}}(e^{A{u}}-1)
+\lambda_2e^{-B{u}}(e^{-B{u}}-1)+h_3=0
\quad \text{on} \quad \mathbb{Z}^n.
\end{equation*}

\end{proof}
It remains to establish the decay estimate for the solution obtained above.

\begin{lemma}\label{negativeestimate}
The solution $u$ of equation \eqref{tz2} satisfies the decay estimate
\begin{equation*}
u=O(e^{-m(1-\epsilon_0)d(x)}),
\end{equation*}
where $m=\log\left(
1+\frac{\lambda_1A-\lambda_2B}{2n}
\right)$ and $0<\epsilon_0<1$.
\end{lemma}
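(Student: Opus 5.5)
The plan is to mirror the comparison argument of Lemma \ref{positivedecayestimate}, applied to $-\tilde u^k$ (or directly to $\tilde u^k$ with reversed inequalities). The only new ingredient is the weaker linear coefficient $c_{\epsilon_0}<\mu$ on the negative side, where $\mu=\lambda_1A-\lambda_2B$.

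\textbf{Step 1: differential inequality outside $\bar\Omega_0$.} First I show that $\tilde u^k$ is a supersolution outside $\bar\Omega_0$, that is,
\[
\Delta\tilde u^k\le g(\tilde u^k)\quad\text{on}\ \mathbb{Z}^n\setminus\bar\Omega_0 .
\]
Here I assume $\Omega_0$ contains all the $p_j$, so that $h_3=0$ there. The three cases are as before.
\begin{itemize}
\item On $\Omega_k\setminus\Omega_0$ this holds with equality.
\item Outside $\bar\Omega_k$ both sides vanish.
\item On $\partial\Omega_k$ we have $\tilde u^k=0$, so $g(0)=0$ and $\Delta\tilde u^k(x)=\sum_{y\sim x}\tilde u^k(y)\le 0$, since $\tilde u^k\le0$.
\end{itemize}

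\textbf{Step 2: linearization with a constant $c<\mu$.} Next I pass to a linear inequality. With $f(u)=g(u)/u$ increasing on $(-\infty,0)$ and $f(0^-)=\mu$, I need a lower bound $f(\tilde u^k)\ge c$ for some $c$ close to $\mu$. The bound $f\ge f(s_0+\delta)$ from Lemma \ref{monolemnegative} is too weak to give a rate near $m$. Instead, fix $\epsilon_0\in(0,1)$ and let
\[
c=c_{\epsilon_0}=2n\bigl[(1+\mu/2n)^{1-\epsilon_0}-1\bigr]\in(0,\mu).
\]
Choose $\eta>0$ with $f(t)\ge c$ for $t\in[-\eta,0)$. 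Then on the set where $-\eta\le\tilde u^k<0$ we get $g(\tilde u^k)=f(\tilde u^k)\tilde u^k\le c\,\tilde u^k$, and hence $\Delta\tilde u^k\le c\,\tilde u^k$. The same inequality holds trivially where $\tilde u^k=0$.

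\textbf{Step 3: uniform smallness (the main obstacle).} To know that $\tilde u^k\ge-\eta$ outside a fixed ball $\{d(x)<R_1\}$, uniformly in $k$, I would first run the comparison with the crude constant $c_0=f(s_0+\delta)>0$. Since $s_0+\delta\le\tilde u^k\le0$, we have $g(\tilde u^k)\le c_0\tilde u^k$ everywhere off $\bar\Omega_0$. Comparing with the barrier $\psi=C_0e^{-m_0d(x)}$, where $m_0=\log(1+c_0/2n)$, via Lemma \ref{maxprinciple} on finite annuli, gives $\tilde u^k\ge -C_0e^{-m_0d(x)}$. Here $C_0$ is chosen from the uniform bound $|\tilde u^k|\le -(s_0+\delta)$ on $\{d(x)=R_0\}$, and the annuli are finite because $\tilde u^k$ vanishes off $\Omega_k$. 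This yields a radius $R_1$, independent of $k$, beyond which $\tilde u^k\ge-\eta$.

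\textbf{Step 4: refined barrier.} Now use $\phi(x)=e^{-m(1-\epsilon_0)d(x)}$. Because $\log(1+c/2n)=(1-\epsilon_0)m$, the computation cited in Lemma \ref{positivedecayestimate} from \cite[Page 496]{MR5045709} gives $\Delta\phi\le c\,\phi$; this is that inequality with the sign of the function reversed. Set $w=-(C\phi+\tilde u^k)$. Then $(\Delta-c)w\ge0$ on $\{d(x)>R_1\}$, and $w\le0$ on $d(x)=R_1$ once $C\ge\eta e^{m(1-\epsilon_0)R_1}$, using $|\tilde u^k|\le\eta$ there. Moreover $w\le0$ far out, since $\tilde u^k=0$ off $\Omega_k$. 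Applying Lemma \ref{maxprinciple} on the finite set $\{R_1<d(x)\}\cap\bar\Omega_k$ (enlarged slightly) gives
\[
-Ce^{-m(1-\epsilon_0)d(x)}\le\tilde u^k\le0,
\]
with $C$ independent of $k$. Letting $k\to\infty$ via Lemma \ref{negativesolutionznlemm} yields $u=O(e^{-m(1-\epsilon_0)d(x)})$.
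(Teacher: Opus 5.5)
Your argument is correct, and it proves more than the paper's proof does.

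\textbf{What the paper does.} The paper's proof is essentially your Step 3 on its own. It uses only the global bound $f(\tilde u^k)\ge f(s_0+\delta)$, which comes from $s_0+\delta\le\tilde u^k\le0$. It then \emph{chooses} $\epsilon_0=\epsilon_0(\delta)\in(0,1)$ so that $c_{\epsilon_0}\le f(s_0+\delta)$. A single comparison with $-Ce^{-m(1-\epsilon_0)d(x)}$ on $\{d(x)\ge R_1\}$ then finishes. So the paper obtains the estimate only for those $\epsilon_0$ with $(1-\epsilon_0)m\le\log\bigl(1+f(s_0+\delta)/(2n)\bigr)$. This rate degenerates as $\delta\to0^+$.

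\textbf{What you do differently.} You use the crude comparison only to get a radius $R_1$, uniform in $k$, beyond which $-\eta\le\tilde u^k\le0$. You then rerun the barrier argument in the region where $f(\tilde u^k)$ is already within any prescribed distance of $f(0^-)=\mu$.

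\textbf{What each approach buys.} Your route costs one extra comparison step; the paper's is shorter. In return you get the decay $O(e^{-m(1-\epsilon_0)d(x)})$ for \emph{every} $\epsilon_0\in(0,1)$, with only the constant depending on $\epsilon_0$ and $\delta$. The paper's one-step argument ties $\epsilon_0$ to $\delta$. Your version is what the heuristic in the Remark following Theorem \ref{nagetivetz2tm2} actually indicates, since that heuristic is based on the limit $\lim_{t\to0^-}$ of the nonlinear quotient.
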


\begin{proof}
As in the derivation of \eqref{15}--\eqref{18}, we can show that $\tilde{u}^k$ satisfies
\begin{equation}
-\Delta\tilde{u}^k+\lambda_1e^{A\tilde{u}^k}
(e^{A\tilde{u}^k}-1)
+\lambda_2e^{-B\tilde{u}^k}(e^{-B\tilde{u}^k}-1)+h_3\geq 0
\quad \text{on} \quad \mathbb{Z}^n.
\end{equation}
Moreover,
according to Lemma \ref{monolemnegative}, we know that
\begin{equation*}
s_0+\delta\leq \tilde{u}^k\leq0.
\end{equation*}
For $x\in \mathbb{Z}^n\backslash\bar{\Omega}_0$,
a direct computation shows that
\begin{equation*}
\begin{aligned}
\displaystyle{\Delta \tilde{u}^{k}}
&\leq\displaystyle{
\lambda_1e^{A\tilde{u}^{k}}(e^{A\tilde{u}^{k}}-1)
+\lambda_2e^{-B\tilde{u}^{k}}(e^{-B\tilde{u}^{k}}-1)
}\\[12pt]
&\leq\lim\limits_{\varepsilon\rightarrow 0^+}
\lambda_1e^{A(\tilde{u}^{k}+\varepsilon)}(e^{A(\tilde{u}^{k}+\varepsilon)}-1)
+\lambda_2e^{-B(\tilde{u}^{k}+\varepsilon)}(e^{-B(\tilde{u}^{k}+\varepsilon)}-1)\\[12pt]
&\leq\lim\limits_{\varepsilon\rightarrow 0^+}\left(\frac{
\lambda_1e^{A(\tilde{u}^{k}+\varepsilon)}(e^{A(\tilde{u}^{k}+\varepsilon)}-1)
+\lambda_2e^{-B(\tilde{u}^{k}+\varepsilon)}(e^{-B(\tilde{u}^{k}+\varepsilon)}-1)}{\tilde{u}^{k}+\varepsilon}
\cdot (\tilde{u}^{k}+\varepsilon)\right)\\[12pt]
&\leq\lim\limits_{\varepsilon\rightarrow 0^+}
\left(f(\tilde{u}^k+\varepsilon)\cdot (\tilde{u}^{k}+\varepsilon)\right)\\[12pt]
&=
f(\tilde{u}^k)\tilde{u}^{k}\leq f(s_0+\delta)\tilde{u}^{k}.
\end{aligned}
\end{equation*}

Next, note that $0<f(s_0+\delta)< \lim_{u\rightarrow0^-} f(u)=\lambda_1A-\lambda_2B$. Then
there exists a constant $\epsilon_0(\delta)\in(0,1)$ such that
$ f(s_0+\delta)\geq 2n\left[\left(1+\frac{\lambda_1A-\lambda_2B}{2n}\right)^{1-\epsilon_0}-1 \right]:=c_1$.
Then we have
\begin{equation*}
\Delta\tilde{u}^k
\leq c_1\tilde{u}^k
\quad\text{in}\quad\mathbb{Z}^n\backslash\bar{\Omega}_0.
\end{equation*}
As in  Lemma \ref{positivedecayestimate},
we set
\begin{equation*}
\phi(x)=-e^{-m(1-\epsilon_0)d(x)}
\end{equation*}
and also get
\begin{equation*}
\Delta \phi(x)\geq  c_1\phi(x).
\end{equation*}
Moreover, we have
\begin{equation*}
(\Delta-c_1)(C(\epsilon_0)\phi(x)-\tilde{u}^k(x))\geq0\ \ \text{on}\ \ \Omega_0'
\end{equation*}
and
\begin{equation*}
\lim\limits_{|x|\rightarrow+\infty}
\left(C(\epsilon_0)\phi(x)-\tilde{u}^k(x)\right)=0,
\quad
C(\epsilon_0)\phi(x)-\tilde{u}^k(x)\leq0
\quad\text{if}\quad d(x)=R_1,
\end{equation*}
where
$\Omega_0'=\{x\in\mathbb{Z}^n:d(x)\geq R_1\}$. By Lemma \ref{maxprinciple}, the above estimate yields
\begin{equation}\label{21}
0\geq \tilde{u}^k(x)
\geq C(\epsilon_0)\phi(x)= -C(\epsilon_0)e^{-m(1-\epsilon_0)d(x)}.
\end{equation}
By convergence and \eqref{21}, we deduce that
\begin{equation*}
0\geq {u}(x)
\geq -C(\epsilon_0)e^{-m(1-\epsilon_0)d(x)},
\end{equation*}
where ${u}(x)$ is  the solution of \eqref{04}.
\end{proof}

\section{The proof of Theorem \ref{positivetz2thm1} and Theorem \ref{nagetivetz2tm2}}
Throughout this section, we denote the Banach space
$$c_0(\mathbb Z^n):=\left\{u:\mathbb Z^n\to\mathbb R:\lim_{|x|\to\infty}u(x)=0
\right\},$$
endowed with the norm
$$\|u\|_{\infty}:=\sup_{x\in\mathbb Z^n}|u(x)|.$$
We first establish the following existence result for the linear problem $\Delta-L$ on $\mathbb Z^n$.

\begin{lemma}\label{l5.1}
Let $L>0$ and $f\in c_0(\mathbb Z^n)$.
Then there exists a unique function $u\in c_0(\mathbb Z^n)$ satisfying
$$
(\Delta-L)u=f
\qquad\text{on }\mathbb\quad Z^n,
$$
with
$$
\|u\|_{\infty}\le \frac{1}{L}\|f\|_{\infty}.
$$
Moreover, if $f\le 0$ on $\mathbb Z^n$, then $u\ge 0$ on $\mathbb Z^n$. Similarly, if
$f\ge 0$, then $u\le 0$.
\end{lemma}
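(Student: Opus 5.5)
The cleanest route treats $L-\Delta$ as a perturbation of a multiple of the identity. Write $\Delta u = Pu - 2n\,u$, where $(Pu)(x)=\sum_{y\sim x}u(y)$. Then $P$ is a bounded operator on $\ell^\infty(\mathbb Z^n)$ with $\|P\|\le 2n$, and $P$ maps $c_0(\mathbb Z^n)$ into itself: each neighbour shift of a function vanishing at infinity still vanishes at infinity. The equation $(\Delta-L)u=f$ is equivalent to
\[
(2n+L)u - Pu = -f,\qquad\text{i.e.}\qquad u = \frac{1}{2n+L}\bigl(Pu - f\bigr)=:T u .
\]
Since $\|P\|/(2n+L)=2n/(2n+L)<1$, the map $T$ is a contraction on the Banach space $c_0(\mathbb Z^n)$. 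Banach's fixed point theorem then gives a unique fixed point $u\in c_0(\mathbb Z^n)$; equivalently, the Neumann series $u=-\sum_{j\ge0}(2n+L)^{-j-1}P^j f$ converges in $c_0$.

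Next comes the sup-norm bound. Taking norms in $(2n+L)u=Pu-f$ gives only $(2n+L)\|u\|_\infty\le 2n\|u\|_\infty+\|f\|_\infty$, which already yields $\|u\|_\infty\le \|f\|_\infty/L$. For a pointwise version, I would use that $u\in c_0$. If $\sup u>0$, it is attained at some $x_0$, since $u$ tends to $0$ at infinity, and there $\Delta u(x_0)\le0$, so $f(x_0)=\Delta u(x_0)-Lu(x_0)\le -Lu(x_0)$, i.e. $u(x_0)\le -f(x_0)/L\le\|f\|_\infty/L$. Symmetrically, if $\inf u<0$ it is attained at some $x_1$, where $\Delta u(x_1)\ge0$ gives $u(x_1)\ge -f(x_1)/L\ge -\|f\|_\infty/L$.

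The sign statements follow from the same extremum argument. If $f\le0$ and $\inf u<0$, then at the minimum point $0\ge f(x_1)=\Delta u(x_1)-Lu(x_1)>0$, a contradiction, so $u\ge0$. The case $f\ge0$ is symmetric. Alternatively, they can be read off the Neumann series, since $P$ is a positivity-preserving operator.

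Uniqueness is already contained in the contraction argument. It also follows directly from the bound: the difference of two solutions solves the equation with $f=0$, hence has sup norm $0$.

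The main point needing care is that extrema are attained. On the infinite lattice Lemma \ref{maxprinciple} does not apply directly, and this is exactly where working in $c_0$ rather than $\ell^\infty$ matters. A positive supremum of a $c_0$ function is attained because outside a finite ball $|u|$ is smaller than half that supremum. If one prefers to avoid the functional-analytic route, an alternative is exhaustion: solve on the $\Omega_k$ with zero boundary data, use Lemma \ref{maxprinciple} to get $\|u_k\|_\infty\le\|f\|_\infty/L$, and pass to a pointwise limit. That route, however, needs an extra argument to show the limit lies in $c_0$, so I would use the contraction argument.
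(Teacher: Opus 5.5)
Your proof is correct and follows essentially the paper's route. Like you, the paper writes $\Delta=\mathcal A-2nI$, checks that $\mathcal A$ maps $c_0(\mathbb Z^n)$ into itself with norm at most $2n$, and inverts $I-\mathcal A/(2n+L)$ by a Neumann series; it then reads off both the bound $\|u\|_\infty\le\|f\|_\infty/L$ and the sign statements from that series and the positivity of $\mathcal A$. Your extremum arguments for the bound and the sign are a valid alternative (a $c_0$ function attains any positive supremum or negative infimum), and your uniqueness via the contraction or the norm estimate is cleaner than the paper's bare appeal to a maximum principle, which Lemma \ref{maxprinciple} does not literally provide on the infinite lattice.
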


\begin{proof}
For $u:\mathbb Z^n\to\mathbb R$, recall that
$$
\Delta u(x)
=
\sum_{y\sim x}\bigl(u(y)-u(x)\bigr)
=
\sum_{y\sim x}u(y)-2n\,u(x).
$$
Define the operator $\mathcal A$ by
$$
\mathcal A u(x):=\sum_{y\sim x}u(y).
$$
Then
$$
\Delta=\mathcal A-2nI.
$$

We first note that $\mathcal A$ maps $c_0(\mathbb Z^n)$ into itself. Indeed, if
$u\in c_0(\mathbb Z^n)$ and $|x|\to\infty$, then for every neighbor $y\sim x$
we also have $|y|\to\infty$. Since each $x\in\mathbb Z^n$ has exactly $2n$
neighbors,
$$
\mathcal A u(x)=\sum_{y\sim x}u(y)\longrightarrow 0
\qquad\text{as }|x|\to\infty.
$$
Moreover,
$$
|\mathcal A u(x)|
\le
\sum_{y\sim x}|u(y)|
\le
2n\|u\|_{\infty}.
$$
Therefore,
$$
\|\mathcal A\|_{c_0\to c_0}\le 2n.
$$
Now, the equation
$$
(\Delta-L)u=f\qquad\text{on }\mathbb\quad Z^n
$$
can be rewritten as
$$
\bigl((2n+L)I-\mathcal A\bigr)u=-f\qquad\text{on }\mathbb\quad Z^n,
$$
or equivalently,
$$
\left(I-\frac{\mathcal A}{2n+L}\right)u
=
-\frac{1}{2n+L}f\qquad\text{on }\mathbb\quad Z^n.
$$
Since $L>0$,
$$
\left\|
\frac{\mathcal A}{2n+L}
\right\|_{c_0\to c_0}
\le
\frac{2n}{2n+L}
<1.
$$
Hence the Neumann series converges in the operator norm on
$c_0(\mathbb Z^n)$, and
$$
\left(I-\frac{\mathcal A}{2n+L}\right)^{-1}
=
\sum_{k=0}^{\infty}
\left(\frac{\mathcal A}{2n+L}\right)^k.
$$
Consequently,
$$
u
=
-\frac{1}{2n+L}
\sum_{k=0}^{\infty}
\left(\frac{\mathcal A}{2n+L}\right)^k f
\in c_0(\mathbb Z^n),
$$
which proves existence. And the uniqueness follows directly from the maximum principle.

The same representation gives
\begin{align*}
\|u\|_{\infty}
\le
\frac{1}{2n+L}
\sum_{k=0}^{\infty}
\left(\frac{2n}{2n+L}\right)^k
\|f\|_{\infty}=
\frac{1}{L}\|f\|_{\infty}.
\end{align*}


Finally, suppose that $f\le0$. Since the operator $\mathcal A$ preserves
nonnegativity, namely
$$
v\ge0 \quad\Longrightarrow\quad \mathcal A v\ge0,
$$
the representation formula
$$
u
=
-\frac{1}{2n+L}
\sum_{k=0}^{\infty}
\left(\frac{\mathcal A}{2n+L}\right)^k f
$$
implies $u\ge0$. The case $f\ge0$ follows analogously.
\end{proof}

\subsection{The proof of Theorem \ref{positivetz2thm1} }

Choose a sufficiently large constant $L>1$ and set $u_0=0$. We consider the following iterative  equations,
\begin{equation}\label{iteraequapositivetz2}
\left\{
\begin{array}{llll}
(\Delta-L)u_{k+1}
=\lambda_1(e^{Au_{k}}-1)
+\lambda_2(e^{-Bu_{k}}-1)-h_3-Lu_{k}
\ \text{on}\ \mathbb{Z}^n,\\[10pt]
\lim\limits_{|x|\rightarrow+\infty} u_{k+1}(x)=0.
\end{array}
\right.
\end{equation}
To state the following lemma, let $M_0>0$ satisfy
\begin{equation}\label{5.2}\lambda_1(e^{AM_0}-1)
+\lambda_2(e^{-BM_0}-1)=4\pi\max\limits_j n_j,\end{equation}
which can be achieved since $\lambda_1A-\lambda_2B>0$.
\begin{lemma}\label{positivemonotonetz2}
Let the sequence $\{u_k\}$ satisfy \eqref{iteraequapositivetz2}. Then each $u_k$ is uniquely defined
and
\begin{equation*}
0\leq u_1\leq u_2\leq\cdots\leq u_k\leq M_0,
\end{equation*}
where $M_0$ is given in \eqref{5.2}.
\end{lemma}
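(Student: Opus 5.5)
We prove the lemma by induction on $k$, using Lemma \ref{l5.1} for existence, uniqueness and sign information, and a discrete maximum principle on $\mathbb Z^n$ for functions in $c_0(\mathbb Z^n)$ for the comparisons. Set
$$
F(t):=\lambda_1(e^{At}-1)+\lambda_2(e^{-Bt}-1)-Lt .
$$
Since $F'(t)=\lambda_1Ae^{At}-\lambda_2Be^{-Bt}-L$, choosing $L\ge \lambda_1Ae^{AM_0}$ gives $F'\le 0$ on $[0,M_0]$. Hence $F$ is nonincreasing there.

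\textbf{Step 1: the first iterate.} For $k=0$ the right-hand side of \eqref{iteraequapositivetz2} is $F(0)-h_3=-h_3$. This is finitely supported, hence lies in $c_0(\mathbb Z^n)$. Lemma \ref{l5.1} then gives a unique $u_1\in c_0(\mathbb Z^n)$. Because $-h_3\le 0$, the same lemma gives $u_1\ge 0$. The bound $\|u_1\|_\infty\le 4\pi\max_j n_j/L$ yields $u_1\le M_0$ once $L$ is large.

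\textbf{Step 2: the inductive step.} Assume $u_k\in c_0(\mathbb Z^n)$ with $0\le u_{k-1}\le u_k\le M_0$.

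\emph{Existence.} $F(u_k)\in c_0$, since $F(0)=0$ and $F$ is continuous. Subtracting $h_3$ keeps the right-hand side in $c_0$, so Lemma \ref{l5.1} gives a unique $u_{k+1}\in c_0$.

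\emph{Monotonicity.} Let $w=u_{k+1}-u_k\in c_0$. Then
$$
(\Delta-L)w=F(u_k)-F(u_{k-1})\le 0,
$$
because $F$ is nonincreasing on $[0,M_0]$ and $u_{k-1}\le u_k$. The positivity part of Lemma \ref{l5.1} gives $w\ge0$. This is legitimate because, by the uniqueness in $c_0$, $w$ is exactly the $c_0$ solution with that right-hand side. The same argument with $u_0=0$ gives $u_1\ge u_0$.

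\emph{Upper bound.} Let $v=u_{k+1}-M_0$. Then
$$
(\Delta-L)v=F(u_k)-h_3+LM_0\ge F(M_0)+LM_0-h_3=\lambda_1(e^{AM_0}-1)+\lambda_2(e^{-BM_0}-1)-h_3\ge0,
$$
using \eqref{5.2}. The function $v$ is not in $c_0$, but it tends to $-M_0<0$ at infinity. So if $\sup v>0$, the supremum is attained at some $x_0$. There $\Delta v(x_0)\le0$, hence $(\Delta-L)v(x_0)\le -Lv(x_0)<0$, a contradiction. Therefore $u_{k+1}\le M_0$.

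\textbf{Main obstacle.} The main obstacle is justifying the maximum principle on the infinite lattice rather than on a finite $\Omega$. I would handle it exactly as in the upper-bound argument: every function involved is in $c_0$ or tends to a constant with a definite sign at infinity, so an extremum violating the claim is attained at a finite point, where the sign of $(\Delta-L)$ gives the contradiction. For differences of iterates it is also enough to rely on the uniqueness and sign statement of Lemma \ref{l5.1}.
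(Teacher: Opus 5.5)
Your proposal is correct and follows essentially the same route as the paper. Lemma \ref{l5.1} gives existence, uniqueness and $u_1\ge 0$; the monotonicity of $F(t)=\lambda_1(e^{At}-1)+\lambda_2(e^{-Bt}-1)-Lt$ on $[0,M_0]$ gives $u_k\le u_{k+1}$; and a maximum-point argument with \eqref{5.2} gives $u_k\le M_0$. Your explicit choice $L\ge\lambda_1Ae^{AM_0}$ and your use of the uniqueness and positivity parts of Lemma \ref{l5.1} for the comparison step, instead of an unspecified maximum principle on $\mathbb Z^n$, are slightly more careful versions of the same steps.
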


\begin{proof}
First, we know that $u_1$ satisfies
\begin{equation}\label{01}
\left\{
\begin{array}{lll}
(\Delta-L)u_1=-h_3,\\[10pt]
u_1\rightarrow 0\ \text{as}\ |x|\rightarrow+\infty,
\end{array}
\right.
\end{equation}which, by Lemma \ref{l5.1},
implies that $u_1\geq0$. Set $\max_{\mathbb{Z}^n}u_1=u_1(x_0)>0$. Then \eqref{01} gives
\begin{equation*}
-4\pi\max_j n_j\leq(\Delta-L)u_1(x_0)\leq-Lu_1(x_0).
\end{equation*}
It follows that  $u_1(x_0)\leq 4\pi\max_j n_j/L$. Now, choose a constant $L_0>0$ such that for $L\geq L_0$, we obtain $0\leq u_1\leq M_0$.
Next, we can choose a sufficiently large $L_1>\max(1,L_0)$, such that the function $\lambda_1(e^{Ax}-1)
+\lambda_2(e^{-Bx}-1)-Lx$ is decreasing on $[0,M_0]$ for any $L\geq L_1$. In the following part of this proof, we always assume $L\geq L_1$.

To continue, it follows from Lemma \ref{l5.1} that there exists a unique $u_2\geq 0$ satisfying \eqref{iteraequapositivetz2} for $k=1$. Moreover, we can show that $u_2\leq M_0$. Indeed, set $\max\limits_{\mathbb{Z}^n}u_{2}(x)
=u_{2}(x_1)=N$ for some $x_1\in \mathbb {Z}^n$. By the decreasing property of the function $\lambda_1(e^{Ax}-1)
+\lambda_2(e^{-Bx}-1)-Lx$ on $[0,M_0]$ and \eqref{5.2}, we deduce that
\begin{equation}\label{02}
\begin{array}{lll}
&\lambda_1(e^{Au_{1}(x_1)}-1)
+\lambda_2(e^{-Bu_{1}(x_1)}-1)-h_3(x_1)-Lu_{1}(x_1)\\[12pt]
&\geq
\lambda_1(e^{AM_0}-1)
+\lambda_2(e^{-BM_0}-1)-LM_0-h_3(x_1)\\[12pt]
&=4\pi\max\limits_j n_j-LM_0-h_3(x_1).
\end{array}
\end{equation}
The maximum principle gives that
\begin{equation}\label{03}
(\Delta-L)u_{2}(x_1)\leq-Lu_{2}(x_1)=-LN.
\end{equation}
Combining \eqref{02} and \eqref{03} leads to
\begin{equation*}
L(M_0-N)\geq 4\pi\max\limits_j n_j-h_3(x_1)\geq0,
\end{equation*}
which implies $\max\limits_{\mathbb{Z}^n}u_{2}(x)=N\leq M_0$. Inductively, we obtain
\begin{equation}
0\leq u_k\leq M_0, \text{ for any }k\in \mathbb N.
\end{equation}
On the other hand, by induction, since $u_0=0\leq u_1$, we assume $0\leq u_1\leq u_2\leq\cdots\leq u_k$. We compute
\begin{equation*}
\begin{aligned}
(\Delta-L)(u_{k+1}-u_k)
=&
\lambda_1(e^{Au_{k}}-1)+\lambda_2(e^{-Bu_{k}}-1)\\[12pt]
&-\lambda_1(e^{Au_{k-1}}-1)-\lambda_2(e^{-Bu_{k-1}}-1)
-L(u_k-u_{k-1})\\[12pt]
\leq&0
\end{aligned}
\end{equation*}
for sufficiently large $L$. Moreover, $u_k(x)-u_{k-1}(x)=0$ as $|x|\rightarrow+\infty$. Then, we have
$u_k\leq u_{k+1}$.
Hence, we obtain Lemma \ref{positivemonotonetz2}.
\end{proof}
By the Monotone Convergence Theorem,
there exists $\tilde u$ such that $0\le\tilde u\le M_0$ and $u_k\rightarrow \tilde{u}$ pointwise on $\mathbb Z^n$ and $\tilde{u}$ satisfies the equation \eqref{tz3}.
Using an argument similar to that in the proof of Lemma \ref{negativeestimate}, we can show  the exponential decay of the nonnegative solution $\tilde{u}$ obtained in Lemma \ref{positivemonotonetz2}.
In fact, we can prove the stronger statement that any bounded nonnegative solution of \eqref{tz3} decays exponentially as $d(x)\to\infty$.

\begin{lemma}\label{l5.3}
Let $\tilde{u}\in l^\infty(\mathbb Z^n)$ be a nonnegative solution of \eqref{tz3},
and assume that $\lambda_1A-\lambda_2B>0$.
Then $\tilde u$ has the following decay estimate:
$$
 \tilde{u}(x)
=
O\bigl(e^{-m d(x)}\bigr)
\quad\text{as}\quad d(x)\to\infty,
$$
where
$$
m
=
\log\left(
1+\frac{\lambda_1A-\lambda_2B}{2n}
\right).
$$
\end{lemma}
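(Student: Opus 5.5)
Set $\mu:=\lambda_1A-\lambda_2B>0$ and $F(t):=\lambda_1(e^{At}-1)+\lambda_2(e^{-Bt}-1)$. The plan is a comparison argument with the barrier $\phi(x)=-e^{-md(x)}$ already used in Lemma \ref{positivedecayestimate}, but working directly with $\tilde u$. This is possible because $\tilde u$ is bounded rather than compactly supported, provided we first show that $\tilde u\to0$ at infinity.

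\textbf{Step 1: a pointwise subsolution inequality.} Since $F(0)=0$, $F'(0)=\mu$, and $F''(t)=\lambda_1A^2e^{At}+\lambda_2B^2e^{-Bt}>0$, the function $F$ is convex. Hence $F(t)\ge\mu t$ for all $t\ge0$; equivalently, $F(t)/t$ is nondecreasing on $(0,\infty)$ with limit $\mu$ at $0^+$. Outside the finite set $\{p_j\}$ we have $h_3=0$, so the equation \eqref{tz3} gives
\[
\Delta\tilde u=F(\tilde u)\ge\mu\tilde u \quad\text{on }\mathbb Z^n\setminus\{p_1,\dots,p_M\}.
\]

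\textbf{Step 2: $\tilde u(x)\to0$ as $d(x)\to\infty$.} This is the main obstacle. Lemma \ref{maxprinciple} is stated only on finite sets, and a bounded nonnegative function with $\Delta\tilde u\ge\mu\tilde u$ could a priori fail to vanish at infinity. I would argue by contradiction. Let $s:=\limsup_{d(x)\to\infty}\tilde u(x)$ and suppose $s>0$. Choose points $x_k$ with $d(x_k)\to\infty$ and $\tilde u(x_k)\to s$. Translate: set $v_k(y):=\tilde u(x_k+y)$. These are uniformly bounded by $\|\tilde u\|_\infty$, so by a diagonal argument a subsequence converges pointwise to some $v$. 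Since the vortex points escape to infinity under the translation, $v$ satisfies $\Delta v=F(v)$ on all of $\mathbb Z^n$. Moreover $0\le v\le s$ and $v(0)=s=\sup v$. At the origin, $\Delta v(0)\le0$, while $F(v(0))=F(s)\ge\mu s>0$. This is a contradiction, so $s=0$.

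\textbf{Step 3: comparison on an exterior domain.} Fix $R_1$ with $\{p_j\}\subset\{d(x)<R_1\}$ and set $\Omega_0'=\{d(x)\ge R_1\}$. As computed in Lemma \ref{positivedecayestimate}, $\Delta\phi\ge\mu\phi$. Choose $C$ so large that $Ce^{-mR_1}\ge\|\tilde u\|_\infty$ (in fact it suffices to control $\tilde u$ on the inner boundary layer). Let $w:=\tilde u+C\phi$. Then $(\Delta-\mu)w\ge0$ on $\Omega_0'$, $w\le0$ on the inner boundary, and $w\to0$ at infinity by Step 2.

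To apply Lemma \ref{maxprinciple} on a finite set, fix $\varepsilon>0$ and take $R$ large enough that $\tilde u\le\varepsilon$ on $\{d(x)=R\}$. Apply the lemma to $w-\varepsilon$ on the annulus $\{R_1\le d(x)<R\}$: since $(\Delta-\mu)(w-\varepsilon)=(\Delta-\mu)w+\mu\varepsilon\ge0$, the lemma applies, with boundary values controlled on both the inner and outer boundaries. Letting $R\to\infty$ and then $\varepsilon\to0$ gives $w\le0$, that is, $0\le\tilde u\le Ce^{-md(x)}$ on $\Omega_0'$. Enlarging $C$ handles the finite set $\{d(x)<R_1\}$, which completes the proof.
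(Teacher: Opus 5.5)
Your argument is correct, but it takes a different route from the paper. The paper uses no barrier and never proves $\tilde u\to0$ as a separate step. From $\Delta\tilde u\ge\mu\tilde u$ off $\Omega_0$ it gets $\sum_{y\sim x}\tilde u(y)\ge(2n+\mu)\tilde u(x)$, so each $x\notin\Omega_0$ has a neighbour $y$ with $\tilde u(y)\ge q\,\tilde u(x)$, where $q=1+\mu/(2n)$. Following such neighbours produces a path along which $\tilde u$ grows like $q^k$. Boundedness forces this path into $\Omega_0$ after $N\ge d(x,\Omega_0)$ steps, which gives $\tilde u(x)\le q^{R_0}\sup_{\Omega_0}\tilde u\;q^{-d(x)}$. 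That single chain argument yields vanishing at infinity and the rate $\log q$ at once, with an explicit constant and no maximum principle on an unbounded set.

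You instead run the comparison argument of Lemma \ref{positivedecayestimate}. You correctly identify what it needs when $\tilde u$ is merely bounded rather than finitely supported:
\begin{itemize}
\item first, $\tilde u\to0$ at infinity (your Step 2);
\item then, the $\varepsilon$-shifted maximum principle on finite annuli, to reach the exterior domain.
\end{itemize}
In Step 2 you can even skip the translation and diagonal extraction. Along $x_k$ with $\tilde u(x_k)\to s$ one has $\limsup_k\sum_{y\sim x_k}\tilde u(y)\le 2ns$, hence $\limsup_k\Delta\tilde u(x_k)\le0$, while $\Delta\tilde u(x_k)=F(\tilde u(x_k))\to F(s)>0$.

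The cost of your route is these two extra steps. The gain is robustness and some sharpness. The barrier computation actually gives $\Delta\bigl(-e^{-a d(x)}\bigr)\ge 2n(\cosh a-1)\bigl(-e^{-a d(x)}\bigr)$ for every $a>0$. So your comparison works for any $a$ with $2n(\cosh a-1)\le\mu$, which allows exponents strictly larger than $m$. The path argument, by contrast, is tied to $\log q$.
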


\begin{proof}
Set
$$
\mu:=\lambda_1A-\lambda_2B>0
$$
and
$$
G(t)
:=
\lambda_1(e^{At}-1)
+
\lambda_2(e^{-Bt}-1).
$$
Choose a finite subset $\Omega_0\subset\mathbb Z^n$ such that
$$
\operatorname{supp}(h_3)
=
\{p_1,\ldots,p_M\}
\subset\Omega_0.
$$
Then the equation \eqref{tz3} yields
\begin{equation}\label{5.7}
\Delta\widetilde u(x)
=
G(\widetilde u(x)),
\quad
x\in\mathbb Z^n\setminus\Omega_0.
\end{equation}
Since $\widetilde u\ge0$, for every $t\ge0$ we have
$$
e^{At}-1\ge At, \quad e^{-Bt}-1\ge -Bt.
$$
Therefore,
$$
\begin{aligned}
G(t)
&=
\lambda_1(e^{At}-1)
+
\lambda_2(e^{-Bt}-1)\\[12pt]
&\ge
\lambda_1At-\lambda_2Bt\\[12pt]
&=
\mu t.
\end{aligned}
$$
Combining this inequality with \eqref{5.7}, we obtain
\begin{equation}\label{5.8}
\Delta\widetilde u(x)
\ge
\mu\widetilde u(x),
\quad
x\in\mathbb Z^n\setminus\Omega_0.
\end{equation}
Recall that
$$
\Delta\widetilde u(x)
=
\sum_{y\sim x}
\bigl(\widetilde u(y)-\widetilde u(x)\bigr).
$$
Thus \eqref{5.8} implies
\begin{equation}\label{5.9}
\sum_{y\sim x}\widetilde u(y)
\ge
(2n+\mu)\widetilde u(x)
\quad
\text{for}\quad x\in\mathbb Z^n\setminus\Omega_0.
\end{equation}
It follows from \eqref{5.9} that, for every
$x\in\mathbb Z^n\setminus\Omega_0$, there exists at least one neighbor
$y\sim x$ such that
\begin{equation}\label{5.10}
\widetilde u(y)
\ge
\left(1+\frac{\mu}{2n}\right)\widetilde u(x).
\end{equation}
Set
$$
q:=1+\frac{\mu}{2n}>1.
$$
Fix $x\in\mathbb Z^n\setminus\Omega_0$. If $\widetilde u(x)=0$, the desired
estimate is trivial. Therefore, we assume that $\widetilde u(x)>0$.
Starting from $x_0:=x$, as long as $x_k\notin\Omega_0$, choose a neighbor
$x_{k+1}\sim x_k$ such that
$$
\widetilde u(x_{k+1})
\ge
q\,\widetilde u(x_k).
$$
By induction,
\begin{equation}\label{5.11}
\widetilde u(x_k)
\ge
q^k\widetilde u(x)
\end{equation}
for every $k$ for which $x_0,\ldots,x_{k-1}\notin\Omega_0$.

We claim that this sequence must enter $\Omega_0$ after finitely many
steps. Otherwise, \eqref{5.11} would hold for every
$k\ge1$, and hence
$$
\widetilde u(x_k)
\ge
q^k\widetilde u(x)
\longrightarrow+\infty
\quad\text{as}\quad k\to\infty,
$$
because $q>1$ and $\widetilde u(x)>0$. This contradicts the assumption
$\widetilde u\in l^\infty(\mathbb Z^n)$. Therefore, there exists an
integer $N\ge1$ such that
$$
x_N\in\Omega_0\quad \text{and}\quad
x_0,\ldots,x_{N-1}\notin\Omega_0.
$$
Since $x_0,x_1,\ldots,x_N$ is a nearest-neighbor path joining $x$ to
$\Omega_0$, its length $N$ is at least the graph distance from $x$ to
$\Omega_0$. Hence
$$N\ge d(x,\Omega_0):=
\min_{z\in\Omega_0}d(x,z).$$
Using \eqref{5.11}, we obtain
$$
\sup_{z\in\Omega_0}\widetilde u(z)
\ge
\widetilde u(x_N)
\ge
q^N\widetilde u(x)
\ge
q^{d(x,\Omega_0)}\widetilde u(x).
$$
Consequently,
\begin{equation}\label{5.12}
\widetilde u(x)
\le
\left(
\sup_{z\in\Omega_0}\widetilde u(z)
\right)
q^{-d(x,\Omega_0)}.
\end{equation}
Since $\Omega_0$ is finite, there exists $R_0>0$ such that
$$
\Omega_0\subset\{z\in\mathbb Z^n:d(z)\le R_0\}.
$$
For every $x\in\mathbb Z^n$ and $z\in\Omega_0$, the triangle inequality
gives
$$
d(x,z)\ge d(x)-d(z)\ge d(x)-R_0.
$$
Therefore,
$$
d(x,\Omega_0)\ge d(x)-R_0.
$$
It follows from \eqref{5.12} that
$$
\begin{aligned}
\widetilde u(x)
&\le
\left(
\sup_{z\in\Omega_0}\widetilde u(z)
\right)
q^{-d(x,\Omega_0)}\\
&\le
\left(
\sup_{z\in\Omega_0}\widetilde u(z)
\right)
q^{R_0}q^{-d(x)}\\
&\le
C e^{-m d(x)},
\end{aligned}
$$
where
$$
m
=
\log q
=
\log\left(
1+\frac{\mu}{2n}
\right)
=
\log\left(
1+\frac{\lambda_1A-\lambda_2B}{2n}
\right)
$$
and
$$
C
=
q^{R_0}
\sup_{z\in\Omega_0}\widetilde u(z).
$$
This completes the proof.
\end{proof}
It follows from Lemmas \ref{positivemonotonetz2} and \ref{l5.3} that there exists a solution $\widetilde u$ of equation \eqref{tz3} satisfying
the desired estimates in Theorem \ref{positivetz2thm1}. We now show the uniqueness.
If there exists another bounded nonnegative solution
$v$ to the equation \eqref{tz3}, then we compute
\begin{equation*}
\Delta(\tilde{u}-v)
=(\lambda_1Ae^{A\xi_1}
-\lambda_2Be^{-B\xi_2})(\tilde{u}-v),
\end{equation*}
where $\xi_1,\xi_2\in [0,\max(\tilde{u},v)]$. Note that
$$\lambda_1Ae^{A\xi_1}
-\lambda_2Be^{-B\xi_2}\geq \lambda_1A-\lambda_2B>0,$$ and $v$ decays exponentially
as $d(x)\rightarrow \infty$ by Lemma \ref{l5.3}.
Then, we conclude that this solution is unique by the maximum principle. It remains to show the comparison relation
$$
u\leq  \tilde{u},
$$
where $u$ is the nonnegative solution obtained in Theorem \ref{positivethm1} and
$ \tilde{u}$ is the nonnegative solution of equation \eqref{tz3}.

Set
$$
F_1(t)
:=
\lambda_1e^{At}(e^{At}-1)
+
\lambda_2e^{-Bt}(e^{-Bt}-1)
$$
and
$$
F_2(t)
:=
\lambda_1(e^{At}-1)
+
\lambda_2(e^{-Bt}-1).
$$
A direct computation gives
$$
F_1(t)-F_2(t)
=
\lambda_1(e^{At}-1)^2
+
\lambda_2(e^{-Bt}-1)^2
\geq 0.
$$
Moreover, for $t\geq 0$,
$$
F_2'(t)
=
\lambda_1Ae^{At}
-
\lambda_2Be^{-Bt}
\geq
\lambda_1A-\lambda_2B
>0.
$$
Hence $F_2$ is strictly increasing on $[0,+\infty)$.
Let
$$
w:=u-\widetilde u.
$$
Since both $u$ and $\widetilde u$ decay to zero at infinity, we have
$$
w(x)\to 0
\qquad\text{as }|x|\to\infty.
$$
Suppose, for contradiction, that
$$
\sup_{\mathbb Z^n} w>0.
$$
Then $w$ attains a positive maximum at some point $x_0\in\mathbb Z^n$.
Therefore,
$$
\Delta w(x_0)\leq 0.
$$
On the other hand, since $u$ and $\widetilde u$ satisfy equations
\eqref{tz1} and \eqref{tz3}, respectively, we have
$$
\Delta w
=
F_1(u)-F_2(\widetilde u).
$$
Since
$$
u(x_0)>\widetilde u(x_0)\geq 0,
$$
it follows that
$$
\begin{aligned}
\Delta w(x_0)
&=
F_1(u(x_0))-F_2(\widetilde u(x_0))\\
&\geq
F_2(u(x_0))-F_2(\widetilde u(x_0))\\
&>0,
\end{aligned}
$$
which contradicts $\Delta w(x_0)\leq 0$. Consequently,
$$
u\leq \widetilde u
\qquad\text{on }\mathbb Z^n.
$$
Therefore,
$$
0\leq u\leq \widetilde u.
$$

\subsection{The proof of Theorem \ref{nagetivetz2tm2} }

Choose a sufficiently large constant $L>0$ and set $u_0=0$. We consider the following iterative  equations,
\begin{equation*}
\left\{
\begin{array}{llll}
(\Delta-L)u_{k+1}
=\lambda_1(e^{Au_{k}}-1)
+\lambda_2(e^{-Bu_{k}}-1)+h_3-Lu_{k}
\ \text{on}\ \mathbb{Z}^n,\\[10pt]
\lim\limits_{|x|\rightarrow+\infty} u_{k+1}(x)=0.
\end{array}
\right.
\end{equation*}
We claim that
\begin{equation*}
0=u_0\geq u_1\geq u_2\geq\cdots\geq u_k\geq\cdots\geq u,
\end{equation*}
where $u$
is the solution obtained in Theorem \ref{negativethm2}. For $k=0$, we have
\begin{equation*}
\left\{
\begin{array}{lll}
(\Delta-L)u_1=h_3,\\[12pt]
\lim\limits_{|x|\rightarrow+\infty} u_{1}(x)=0.
\end{array}
\right.
\end{equation*}
This implies $u_1\leq0$. Let $u$ be the nonpositive solution obtained in Theorem 1.2. Suppose
that
$$
u\leq u_k\leq 0.
$$
We prove that
$$
u\leq u_{k+1}.
$$
Recall that
$$
F_1(t)
:=
\lambda_1e^{At}(e^{At}-1)
+
\lambda_2e^{-Bt}(e^{-Bt}-1)
$$
and
$$
F_2(t)
:=
\lambda_1(e^{At}-1)
+
\lambda_2(e^{-Bt}-1).
$$
Since $u$ satisfies equation (1.10), we have
$$
\Delta u=F_1(u)+h_3.
$$
On the other hand, the iterative equation gives
$$
(\Delta-L)u_{k+1}
=
F_2(u_k)+h_3-Lu_k.
$$
Therefore,
$$
\begin{aligned}
(\Delta-L)(u-u_{k+1})
&=
\Delta u-Lu-(\Delta-L)u_{k+1}\\
&=
F_1(u)-F_2(u_k)-L(u-u_k).
\end{aligned}
$$

Now define
$$
H(t):=F_2(t)-Lt.
$$
Since all the relevant functions take values in the fixed interval
$[s_0+\delta,0]$, we may choose $L>0$ sufficiently large such that
$H$ is decreasing on this interval. Since
$$
u\leq u_k,
$$
we obtain
$$
H(u)\geq H(u_k).
$$
Furthermore,
$$
F_1(u)-F_2(u)
=
\lambda_1(e^{Au}-1)^2
+
\lambda_2(e^{-Bu}-1)^2
\geq0.
$$
Consequently,
$$
\begin{aligned}
(\Delta-L)(u-u_{k+1})
&=
F_1(u)-F_2(u_k)-L(u-u_k)\\
&=
\bigl(F_1(u)-F_2(u)\bigr)
+
\bigl(H(u)-H(u_k)\bigr)\\
&\geq0.
\end{aligned}
$$
Since
$$
u(x)-u_{k+1}(x)\to0
\qquad\text{as }|x|\to\infty,
$$
the maximum principle yields
$$
u-u_{k+1}\leq0.
$$
Hence,
$$
u\leq u_{k+1}.
$$
 Suppose
$u_1\geq u_2\geq\cdots\geq u_k\geq u$. Next, we show $u_k\geq u_{k+1}$.
As in the calculation above, we have
\begin{equation*}
\begin{aligned}
(\Delta-L)(u_{k+1}-u_k)
=&
\lambda_1(e^{Au_{k}}-1)
+\lambda_2(e^{-Bu_{k}}-1)\\[12pt]
&-
\lambda_1(e^{Au_{k-1}}-1)
-\lambda_2(e^{-Bu_{k-1}}-1)
-L(u_{k}-u_{k-1})\\[12pt]
\geq& 0
\end{aligned}
\end{equation*}
for sufficiently large $L>0$. This implies $u_{k+1}\leq u_k$.
Consequently, we have
\begin{equation*}
0=u_0\geq u_1\geq u_2\geq\cdots\geq u_k\geq\cdots\geq u.
\end{equation*}
Since
\begin{equation*}
u=O(e^{-m(1-\epsilon_0)d(x)}),
\end{equation*}
there exists $\bar{u}$ such that
\begin{equation*}
u_k\rightarrow\bar{u}
\end{equation*}
as $k\rightarrow+\infty$. Furthermore, we have
\begin{equation*}
\bar{u}=O(e^{-m(1-\epsilon_0)d(x)})
\end{equation*}
and $\bar{u}$ satisfies \eqref{tz4}.

\section*{Acknowledgements}
Both authors gratefully acknowledge financial support from the China Scholarship Council (CSC).\\

\noindent\textbf{Conflicts of interest/Competing interests}: Not applicable.\\
\textbf{Data availability statement}: Data sharing is not applicable to this article as no datasets were generated or analysed during the current study.\\

\normalem\bibliographystyle{plain}{}

\end{document}